\newcommand{\lr}{\longrightarrow}
\let\c\overline
\theoremstyle{plain}
\newtheorem{theorem}{Theorem}[section]
\newtheorem*{theorem*}{Theorem}
\newtheorem{proposition}[theorem]{Proposition}
\newtheorem{corollary}[theorem]{Corollary}
\newtheorem{remark}[theorem]{Remark}
\newtheorem*{mt*}{Main Theorem}
\newcommand\C{{\mathbb C}}
\newcommand\K{{\mathbb K}}
\newcommand\R{{\mathbb R}}
\newcommand\Z{{\mathbb Z}}
\newcommand{\de}[2]{\frac{\partial #1}{\partial #2}}
\newcommand{\del}{\partial}
\newcommand{\delbar}{\overline{\del}}
\newcommand{\cinf}{\mathcal{C}^\infty}
\renewcommand{\H}{\mathcal{H}}
\DeclareMathOperator{\vol}{Vol}
\DeclareMathOperator{\real}{Re}
\DeclareMathOperator{\img}{Im}
\DeclareMathOperator{\im}{im}
\DeclareMathOperator{\GL}{GL}
\DeclareMathOperator{\End}{End}
\DeclareMathOperator{\id}{id}
\DeclareMathOperator{\rank}{rank}
\DeclareMathOperator{\Hom}{Hom}
\DeclareMathOperator{\lv}{\lvert}
\DeclareMathOperator{\rv}{\rvert}
\let\c\overline
\title{Bott-Chern Laplacian on almost Hermitian manifolds}
\author{Riccardo Piovani}
\address{Dipartimento di Matematica\\
Universit\`{a} di Pisa\\
Largo Bruno Pontecorvo 5 \\
56127 Pisa, Italy}
\address{Current Address: Dipartimento di Scienze Matematiche, Fisiche e Informatiche\\
Unit\`{a} di Matematica e Informatica\\
Universit\`{a} degli Studi di Parma\\
Parco Area delle Scienze 53/A \\
43124 Parma, Italy}
\email{riccardo.piovani@unipr.it}
\author{Adriano Tomassini}
\address{Dipartimento di Scienze Matematiche, Fisiche e Informatiche\\
Unit\`{a} di Matematica e Informatica\\
Universit\`{a} degli Studi di Parma\\
Parco Area delle Scienze 53/A \\
43124 Parma, Italy}
\email{adriano.tomassini@unipr.it}
\keywords{almost-complex manifold; Bott-Chern harmonic form; Kodaira-Thurston
manifold}
\thanks{\newline 
The first author is partially supported by GNSAGA of INdAM.
The second author is partially supported by the Project PRIN 2017 ``Real and Complex Manifolds: Topology, Geometry and holomorphic dynamics'' and by GNSAGA of INdAM}
\subjclass[2020]{53C15; 58A14; 58J05}
\begin{document}
\maketitle

\begin{abstract} 
Let $(M,J,g,\omega)$ be a $2n$-dimensional almost Hermitian manifold. We extend the definition of the Bott-Chern Laplacian on $(M,J,g,\omega)$, proving that it is still elliptic.
On a compact K\"ahler manifold, the kernels of the Dolbeault Laplacian and of the Bott-Chern Laplacian coincide.  We show that such a property does not hold when $(M,J,g,\omega)$ is a compact almost K\"ahler manifold, providing an explicit almost K\"ahler structure on the Kodaira-Thurston manifold. 
Furthermore, if $(M,J,g,\omega)$ is a connected compact almost Hermitian $4$-manifold, denoting by $h^{1,1}_{BC}$ the dimension of the space of Bott-Chern harmonic $(1,1)$-forms, we prove that either $h^{1,1}_{BC}=b^-$ or $h^{1,1}_{BC}=b^-+1$. In particular, if $g$ is almost K\"ahler, then $h^{1,1}_{BC}=b^-+1$, extending the result by Holt and Zhang \cite{HZ} for the kernel of the Dolbeault Laplacian. We also show that the dimensions of the spaces of Bott-Chern and Dolbeault harmonic $(1,1)$-forms behave differently on almost complex 4-manifolds endowed with strictly locally conformally almost K\"ahler metrics. Finally, we relate some spaces of Bott-Chern harmonic forms to the Bott-Chern cohomology groups for almost complex manifolds, recently introduced in \cite{CPS}.

%

\end{abstract}

\section{Introduction}
On a complex manifold, given a Hermitian metric, several elliptic operators naturally arise from the union of the complex and the Hermitian structure. As a typical example, the {\em Dolbeault Laplacian} is defined as $\Delta_{\delbar}=\delbar\delbar^*+\delbar^*\delbar$, where the exterior differential defined on the space $A^{p,q}$ of $(p,q)$-forms decomposes as $d=\del+\delbar$ and, if $*:A^{p,q}\lr A^{n-q,n-p}$ is the $\C$-linear complex Hodge star operator, where $n$ is the complex dimension of the complex manifold, then $\del^*=-*\delbar*$ and $\delbar^*=-*\del*$ are the formal adjoints of the operators $\del$ and $\delbar$, respectively. Denote by $\H^{p,q}_{\delbar}$ the space of Dolbeault harmonic forms, i.e., the kernel of $\Delta_{\delbar}$. Since $\Delta_{\delbar}$ is elliptic, when the manifold is compact, by Hodge theory then $\H^{p,q}_{\delbar}$ is isomorphic to the Dolbeault cohomology 
\[
H^{p,q}_{\delbar}=\frac{\ker \delbar}{\im\delbar}
\]
and it is finite dimensional; denote by $h^{p,q}_{\delbar}$ its finite complex dimension.

In 1960 Kodaira and Spencer, \cite{KS}, in order to prove the stability of the K\"ahler condition under small deformations,  introduced the following $4^{th}$-order elliptic and formally self adjoint differential operator
\begin{equation*}
\tilde\Delta_{BC} \;=\;
\del\delbar\delbar^*\del^*+
\delbar^*\del^*\del\delbar+\del^*\delbar\delbar^*\del+\delbar^*\del\del^*\delbar
+\del^*\del+\delbar^*\delbar.
\end{equation*}
Schweitzer, in 2007, \cite{S}, studied the operator $\tilde\Delta_{BC}$ on compact Hermitian manifolds, naming it the {\em Bott-Chern Laplacian}. In particular, denoting by $\H^{p,q}_{BC}$ the space of Bott-Chern harmonic $(p,q)$-forms on a given compact Hermitian manifold $(M,J,g,\omega)$, he proved the following Bott-Chern decomposition of the space of $(p,q)$-forms
\begin{gather}\label{bc-decomp}
A^{p,q}=\H^{p,q}_{BC}\overset{\perp}{\oplus}{\del\delbar A^{p-1,q-1}}\overset{\perp}{\oplus}{\del^* A^{p+1,q}+\delbar^* A^{p,q+1}}.
\end{gather}
As a consequence, the space $\H^{p,q}_{BC}$ is finite dimensional and $\H^{p,q}_{BC}\cong H^{p,q}_{BC}$, where
\[
H^{p,q}_{BC}=\frac{\ker d}{\im \del\delbar}
\]
denotes the $(p,q)$-{\em Bott-Chern cohomology group}.
In particular, the complex dimension $h^{p,q}_{BC}=\dim_\C\H^{p,q}_{BC}$ is a complex invariant of $(M,J)$, which does not depend on the Hermitian metric $g$.

If the compact Hermitian manifold $(M,J,g,\omega)$ is K\"ahler, i.e., $d\omega=0$, then
\begin{equation}\label{bc-lapl-kahl}
\tilde\Delta_{BC} =\Delta_{\delbar}\Delta_{\delbar}
+\del^*\del+\delbar^*\delbar
\end{equation} and
the spaces of Bott-Chern and Dolbeault harmonic forms coincide, i.e.,
\begin{equation}\label{eq-bc-delbar}
\H^{p,q}_{BC}=\H^{p,q}_{\delbar}.
\end{equation}

Now, let $(M,J,g,\omega)$ be an almost Hermitian manifold, i.e., the almost complex structure $J$ may not be integrable, i.e., $J$ may not derive from a complex-manifold structure on $M$. The exterior differential decomposes as $d=\mu+\del+\delbar+\c\mu$, and Dolbeault and Bott-Chern cohomologies are, in general, no more well defined. However, the Dolbeault Laplacian $\Delta_{\delbar}$ is still well defined and elliptic, resulting in $\H^{p,q}_{\delbar}$ being finite dimensional when $M$ is compact.

The study of Dolbeault harmonic forms on almost Hermitian manifold of real dimension 4 has been very recentely developed by Holt and Zhang, \cite{HZ, HZ2}, and by Tardini and the second author, \cite{TT}. Holt and Zhang, working on the Kodaira-Thurston manifold, showed that the number $h^{0,1}_{\delbar}$ may become arbitrarily large when varying continuously the almost complex structure with an associated almost K\"ahler metric and that $h^{0,1}_{\delbar}$ may vary with different choices of almost Hermitian metrics. Furthermore, they proved that $h^{0,1}_{\delbar}$ may vary with different choices of almost K\"ahler metrics. In this way they answered a question by Kodaira and Spencer, \cite[Problem 20]{Hi}. Moreover, they showed $h^{1,1}_{\delbar}=b^-+1$ on every compact almost K\"ahler 4-manifold, where $b^-$ is the dimension of the space of anti-self-dual, i.e., $*\alpha=-\alpha$, Hodge harmonic 2-forms, which is a topological invariant. Tardini and the second author proved that $h^{1,1}_{\delbar}=b^-$ on every compact almost complex 4-manifold with a strictly locally conformally almost K\"ahler metric.

In this paper, we focus on the study of the Bott-Chern Laplacian on almost Hermitian manifolds.
Note that, analogously to the Dolbeault Laplacian, also the Bott-Chern Laplacian $\tilde\Delta_{BC}$ is still well defined on almost Hermitian manifolds $(M,J,g,\omega)$, and it is straightforward to show that it is also elliptic, see Proposition \ref{prop-ell}. Therefore, when $M$ is compact, the Bott-Chern decomposition \eqref{bc-decomp} still holds, and $\H^{p,q}_{BC}$ is finite dimensional. 

We prove the following
\begin{theorem*}[Theorem \ref{thm-11}] \label{main-1}
Let $(M,J,g,\omega)$ be a compact almost Hermitian manifold of real dimension $4$. Then either $h^{1,1}_{BC}=b^-$ or $h^{1,1}_{BC}=b^-+1$.
\end{theorem*}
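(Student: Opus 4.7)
The plan is to sandwich $h^{1,1}_{BC}$ between $b^-$ and $b^-+1$ by comparing $\H^{1,1}_{BC}$ with the complexification $\H^-\otimes\C$ of the space of anti-self-dual Hodge-harmonic real $2$-forms. The key point is that in real dimension $4$ one has $A^-\otimes\C = A^{1,1}_0$ (anti-self-dual complex $2$-forms coincide with primitive $(1,1)$-forms), so $\H^-\otimes\C = \{\beta\in A^{1,1}_0 : d\beta = 0\}$ has complex dimension $b^-$.

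First I would establish the inclusion $\H^-\otimes\C\subseteq\H^{1,1}_{BC}$, which gives $h^{1,1}_{BC}\geq b^-$. When $n=2$ the spaces $A^{3,0}$ and $A^{0,3}$ vanish, so $\mu$ and $\c\mu$ annihilate $A^{1,1}$ by bidegree; hence for $\beta\in A^{1,1}_0$ the condition $d\beta=0$ reduces to $\del\beta = \delbar\beta = 0$, and primitivity $*\beta=-\beta$ gives $\del^*\beta = -*\delbar*\beta = *\delbar\beta = 0$, whence $\delbar^*\del^*\beta = 0$ as well. Pairing with $\beta$ yields $\langle\tilde\Delta_{BC}\beta,\beta\rangle = \|\del\beta\|^2 + \|\delbar\beta\|^2 + \|\delbar^*\del^*\beta\|^2 + (\text{redundant terms})$, so $\tilde\Delta_{BC}\beta = 0$.

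For the upper bound $h^{1,1}_{BC}\leq b^-+1$, I would decompose each $\alpha\in A^{1,1}$ pointwise as $\alpha = \alpha_0 + f\omega$ with $\alpha_0\in A^{1,1}_0$ and $f = \tfrac12\Lambda\alpha\in C^\infty(M,\C)$. The resulting linear map $T\colon\H^{1,1}_{BC}\to C^\infty(M,\C)$, $\alpha\mapsto f$, has kernel $\H^{1,1}_{BC}\cap A^{1,1}_0 = \H^-\otimes\C$ by the previous step, so the theorem reduces to showing $\dim_\C\im T\leq 1$. Introduce the Lee form $\theta$, characterized in real dimension $4$ by $d\omega = \omega\wedge\theta$ (possible because $L\colon A^1\to A^3$ is an isomorphism when $n=2$), so that $\del\omega = L\theta^{1,0}$ and $\delbar\omega = L\theta^{0,1}$. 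Using that $A^{2,1}_0 = A^{1,2}_0 = 0$ in real dimension $4$, the equations $\del\alpha = \delbar\alpha = 0$ translate into the first-order relations $\del f + f\theta^{1,0} = -\Lambda\del\alpha_0$ and $\delbar f + f\theta^{0,1} = -\Lambda\delbar\alpha_0$; a direct computation using $*\omega = \omega$ and the primitive identity $*L\gamma = i\gamma$ on $A^{0,1}$-forms then gives $\del^*\alpha = -2i(\delbar f + f\theta^{0,1})$, so the remaining condition $\delbar^*\del^*\alpha = 0$ is equivalent to the scalar second-order equation $\delbar^*(\delbar f + f\theta^{0,1}) = 0$.

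The main obstacle is to combine these mutually constraining equations and conclude that $\dim_\C\im T\leq 1$ on the compact connected manifold $M$. The strategy will be to assemble the three conditions into a single elliptic operator on $C^\infty(M,\C)$ with Laplacian-type principal symbol (coming from $\delbar^*\delbar f$) and lower-order corrections involving $\theta$, and then apply an integration-by-parts argument to show its kernel is at most one-dimensional. In the almost K\"ahler case $\theta\equiv 0$ the system collapses directly to $\delbar^*\delbar f = 0$, whose kernel on a compact connected manifold consists of the constants, giving $h^{1,1}_{BC} = b^-+1$ with the extra generator $\omega$; the delicate general case needs careful control of the zeroth-order terms produced by $\theta$ and $d\omega$, and it is precisely these terms that can force $\im T = 0$ (yielding $h^{1,1}_{BC} = b^-$), producing the dichotomy in the statement.
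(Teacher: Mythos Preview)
Your lower bound $\H^-\otimes\C\subset\H^{1,1}_{BC}$ and the decomposition $\alpha=f\omega+\alpha_0$ with the map $T:\alpha\mapsto f$ are correct and match the paper's setup exactly. The identification $\ker T=\H^-\otimes\C$ is right, so everything reduces to showing $\dim_\C\im T\le 1$.

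The gap is precisely here. Your scalar equation $\delbar^*(\delbar f+f\theta^{0,1})=0$ has a genuine zeroth-order term $f\,\delbar^*\theta^{0,1}$, and neither the maximum principle nor the integration-by-parts pairing $\|\delbar f\|^2+\langle f\theta^{0,1},\delbar f\rangle=0$ controls it: the cross term has no sign, and the zeroth-order coefficient need not be nonnegative. Saying you will ``assemble the three conditions into a single elliptic operator'' and hope its kernel is at most one-dimensional is not an argument; a second-order elliptic operator with an arbitrary zeroth-order potential can have kernel of any finite dimension. You have not explained how the remaining first-order constraints $\del f+f\theta^{1,0}=-\Lambda\del\alpha_0$ and its conjugate (which still involve the unknown $\alpha_0$) feed back to kill that term.

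The paper's key idea, which you are missing, is to exploit the \emph{conformal invariance} of $\H^{1,1}_{BC}$ in middle degree (since $*$ is conformally invariant on $A^{p,q}$ with $p+q=n$) and pass, via Gauduchon's theorem, to a metric with $\del\delbar\omega=0$. In that gauge the condition $\del\delbar*\alpha=0$, combined with $\delbar\alpha=0$, becomes
\[
\del\delbar f\wedge\omega-\delbar f\wedge\del\omega+\del f\wedge\delbar\omega=0,
\]
which is a \emph{real} second-order strongly elliptic equation in $f$ with \emph{no zeroth-order term} (the would-be term $f\,\del\delbar\omega$ has been normalized away). The strong maximum principle then forces $f$ to be constant on the compact connected $M$, so $\im T\subset\C$ and $\dim_\C\im T\le 1$. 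The dichotomy $b^-$ versus $b^-+1$ then falls out by asking whether a constant $f\ne 0$ actually occurs, exactly as you anticipated.
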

Moreover, we specialize the previous theorem when the almost Hermitian metric $\omega$ is almost K\"ahler, i.e., $d\omega=0$, obtaining that $h^{1,1}_{BC}$ is independent of the choice of an almost K\"ahler metric on a given compact almost complex 4-manifold, that is, 
\begin{theorem*}[Corollary \ref{cor-11}]\label{main-2}
Let $(M,J,g,\omega)$ be a compact almost K\"ahler manifold of real dimension $4$.
Then, $h^{1,1}_{BC}=b^-+1$ and $\H^{1,1}_{BC}=\H^{1,1}_{\delbar}$.
\end{theorem*}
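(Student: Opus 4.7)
The plan is to exhibit an explicit $(b^-+1)$-dimensional subspace inside $\H^{1,1}_{BC}$, combine this with the upper bound $h^{1,1}_{BC}\leq b^-+1$ from Theorem \ref{thm-11} to pin down the dimension, and finally match $\H^{1,1}_{BC}$ with $\H^{1,1}_{\delbar}$ using Holt--Zhang's equality $h^{1,1}_{\delbar}=b^-+1$. From the Bott--Chern decomposition \eqref{bc-decomp}, a form $\alpha\in A^{1,1}$ is Bott--Chern harmonic iff $\del\alpha=\delbar\alpha=0$ and $\delbar^*\del^*\alpha=0$; the first two reduce to $d\alpha=0$ by bidegree (the components $\mu\alpha\in A^{2,0}$ and $\c\mu\alpha\in A^{0,2}$ must also vanish), and the third is implied by the stronger condition $\del^*\alpha=0$.

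I would verify this strengthened condition on two natural sources of harmonic forms: the almost K\"ahler form $\omega$ itself, and the complexification of the space $\H^-_g$ of anti-self-dual $g$-harmonic 2-forms, which in real dimension 4 consists entirely of real primitive $(1,1)$-forms and therefore sits inside $A^{1,1}$. Using $*\omega=\omega$, $*\alpha=-\alpha$ for $\alpha\in\H^-_g$, the identity $\del^*=-{*}\delbar{*}$ (valid in the almost Hermitian setting, as it follows purely from $d^*=-{*}d{*}$ combined with the bidegree decomposition of $d$ and the action of ${*}$ on $A^{p,q}$), and the consequences $\delbar\omega=\delbar\alpha=0$ of $d\omega=d\alpha=0$, I compute
\[
\del^*\omega \;=\; -{*}\delbar\omega \;=\; 0,\qquad \del^*\alpha \;=\; -{*}\delbar{*}\alpha \;=\; {*}\delbar\alpha \;=\; 0.
\]
Thus $\C\omega+(\H^-_g\otimes\C)\subseteq\H^{1,1}_{BC}$, and this sum is direct (and has complex dimension $b^-+1$) because $\omega$ is self-dual while every element of $\H^-_g$ is anti-self-dual. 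Combined with Theorem \ref{thm-11}, this forces $h^{1,1}_{BC}=b^-+1$ and $\H^{1,1}_{BC}=\C\omega\oplus(\H^-_g\otimes\C)$.

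Finally, the same argument with $\delbar^*=-{*}\del{*}$ gives $\delbar^*\omega=0$ and $\delbar^*\alpha=0$ for $\alpha\in\H^-_g$, hence $\C\omega\oplus(\H^-_g\otimes\C)\subseteq\H^{1,1}_{\delbar}$. Since Holt--Zhang give $h^{1,1}_{\delbar}=b^-+1$, this inclusion must be an equality, so $\H^{1,1}_{BC}=\H^{1,1}_{\delbar}$. There is no substantial obstacle in the plan; the only delicate point is verifying that the non-integrability of $J$ does not obstruct the Hodge adjoint identities $\del^*=-{*}\delbar{*}$ and $\delbar^*=-{*}\del{*}$, which it does not since they reduce to the purely Riemannian identity $d^*=-{*}d{*}$.
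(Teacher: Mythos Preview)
Your proof is correct and follows essentially the same route as the paper: both arguments identify $\H^{1,1}_{BC}=\C\omega\oplus(\H^-_g\otimes\C)$ in the almost K\"ahler case and then invoke \cite[Proposition 6.1]{HZ} for the Dolbeault side. The only difference is one of emphasis: the paper specializes the explicit description \eqref{eq-11-bc} established inside the proof of Theorem \ref{thm-11} under the hypothesis $d\omega=0$, whereas you verify the inclusion $\C\omega\oplus(\H^-_g\otimes\C)\subseteq\H^{1,1}_{BC}$ by direct computation and use only the dichotomy $h^{1,1}_{BC}\in\{b^-,b^-+1\}$ from the theorem's statement. One small slip to fix: for $\alpha\in A^{1,1}$ on a real $4$-manifold, the components $\mu\alpha$ and $\c\mu\alpha$ land in $A^{3,0}$ and $A^{0,3}$ (both zero since the complex dimension is $2$), not in $A^{2,0}$ and $A^{0,2}$; this does not affect the validity of your argument.
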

Note that in the integrable case, i.e., on compact complex surfaces, it holds that $h^{1,1}_{BC}=b^-+1$ on K\"ahler surfaces, on complex surfaces diffeomorphic to solvmanifolds, and on complex surfaces of class VII (see \cite[Chapter IV, Theorem 2.7]{BPV} and \cite{ADT}).

We also provide a non integrable almost complex structure on a hyperelliptic surface, endowed with a strictly locally conformally almost K\"ahler metric, such that $h^{1,1}_{BC}=b^-+1$. This proves that the dimension of Bott-Chern harmonic $(1,1)$-forms behaves differently than the dimension of Dolbeault harmonic $(1,1)$-forms, \cite{TT}, when the almost complex 4-manifold is endowed with a strictly locally conformally almost K\"ahler metric.

Very recently Holt improved the result of Theorem \ref{thm-11}, by showing that $$h^{1,1}_{BC}=b^-+1$$ on any given compact almost Hermitian $4$-manifold, see \cite[Theorem 4.2]{Ho}.

Taking into account the integrable case, one may ask whether \eqref{bc-lapl-kahl} and \eqref{eq-bc-delbar} holds or not, when the almost Hermitian metric is almost K\"ahler. We show that \eqref{eq-bc-delbar} is not true, describing an explicit example on the Kodaira-Thurston manifold. This also implies that \eqref{bc-lapl-kahl} does not hold.
In fact, working on a family of almost K\"ahler metrics on the Kodaira-Thurston manifold, we show
\begin{theorem*}[Corollary \ref{cor-kod-2}]\label{main-3}
There exists an almost K\"ahler $4$-manifold $(M,J,g,\omega)$ such that for some bidegree $(p,q)$ it holds that
\begin{equation*}
\H^{p,q}_{BC}\ne\H^{p,q}_{\delbar}.
\end{equation*}
\end{theorem*}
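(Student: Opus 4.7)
The plan is to exhibit the desired counterexample on the Kodaira--Thurston manifold, which is the prototype example of a compact almost Kähler, non-Kähler, $4$-manifold. I would realise it as a nilmanifold $M=\Gamma\backslash(\mathbb{H}_3\times\mathbb{R})$ with a global left-invariant coframe $\{e^1,e^2,e^3,e^4\}$ satisfying structure equations with only $de^4=e^1\wedge e^2$ non-trivial, and introduce a one-parameter family of left-invariant almost Kähler structures $(J_t,g_t,\omega_t)$ for which the $(1,0)$-forms are explicit linear combinations of the $e^i+ie^j$ (with a parameter placed in the $J$-action or equivalently in the metric) so as to obtain a non-integrable $J$ whose Nijenhuis tensor is encoded by a non-zero $\mu$-component of $d$.

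Once the structure equations are written in the bigraded coframe $\{\varphi^1,\varphi^2,\bar\varphi^1,\bar\varphi^2\}$, I would explicitly compute the four pieces $\mu,\del,\delbar,\bar\mu$ of $d$ on left-invariant forms, together with their formal adjoints via the complex Hodge operator $*$ attached to $g_t$. Because the structure is almost Kähler but not Kähler, $\mu\neq 0$ and the Kähler identities which, in the integrable case, yield $\tilde\Delta_{BC}=\Delta_\delbar\Delta_\delbar+\del^*\del+\delbar^*\delbar$ (equation \eqref{bc-lapl-kahl}), now acquire correction terms, so there is no a priori reason for the kernels to agree. Using the Bott--Chern decomposition \eqref{bc-decomp}, a left-invariant form $\alpha\in A^{p,q}$ is Bott--Chern harmonic iff $\del\alpha=\delbar\alpha=0$ and $(\del\delbar)^*\alpha=0$, while it is Dolbeault harmonic iff $\delbar\alpha=\delbar^*\alpha=0$. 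The strategy is to exhibit at some bidegree $(p,q)$ a left-invariant $\alpha$ for which one set of conditions holds but not the other; since in real dimension $4$ Corollary \ref{cor-11} forces equality at bidegree $(1,1)$, I would focus on $(2,1)$ (or the conjugate bidegree $(1,2)$).

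Concretely, I would parametrise the space of left-invariant $(2,1)$-forms and solve the Bott--Chern system $\del\alpha=\delbar\alpha=(\del\delbar)^*\alpha=0$ on one hand, and the Dolbeault system $\delbar\alpha=\delbar^*\alpha=0$ on the other. By an invariance argument (Nomizu-type: left-invariant harmonic representatives capture the full harmonic spaces on a nilmanifold for our elliptic operators), the full harmonic spaces are computed from these finite-dimensional linear systems. One then checks, for at least one value of the parameter $t$, that the dimensions differ. A natural outcome is to find a left-invariant form $\alpha$ satisfying $\delbar\alpha=\delbar^*\alpha=0$ (so $\alpha\in\H^{p,q}_{\delbar}$) but with $\del\alpha\neq 0$, so that $\alpha\notin\H^{p,q}_{BC}$; this yields $\H^{p,q}_{BC}\neq\H^{p,q}_{\delbar}$ and hence, by \eqref{bc-lapl-kahl}, also shows that the Kähler-type identity relating the two Laplacians fails.

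The main obstacle is technical rather than conceptual: it lies in carrying out the computation of the adjoints $\del^*,\delbar^*,(\del\delbar)^*$ with respect to the family $g_t$, making sure that the parametrisation of $(J_t,g_t)$ is rich enough to produce a genuine asymmetry between the two harmonic spaces, and then in verifying that the corresponding linear systems really have different solution spaces. If the computation at bidegree $(2,1)$ were to yield accidental coincidence, one would need to redo it at $(1,2)$ or at $(0,1)/(1,0)$, which are the other bidegrees not constrained by Corollary \ref{cor-11}; the Holt--Zhang results cited above, showing that $h^{0,1}_{\delbar}$ is highly sensitive to the almost Kähler metric on the Kodaira--Thurston manifold, suggest that at least one of these bidegrees will indeed detect the discrepancy.
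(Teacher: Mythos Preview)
Your plan has a genuine gap: the Nomizu-type invariance assertion is false for the Dolbeault and Bott--Chern Laplacians on almost complex nilmanifolds, and without it your strategy collapses. Indeed, the very Holt--Zhang result you cite shows that on the Kodaira--Thurston manifold $\H^{0,1}_{\delbar}$ contains forms such as $\sigma=C\,e^{2\pi i l x}\,\bar\phi^{2}$ (for $b=4\pi l$) that are \emph{not} left-invariant, and that $h^{0,1}_{\delbar}$ can be made arbitrarily large; no finite-dimensional left-invariant model can reproduce this. The same phenomenon occurs for $\H^{p,q}_{BC}$ at bidegree $(1,2)$, where the paper finds non--left-invariant Bott--Chern harmonic forms via Fourier analysis.

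Worse, the left-invariant computation you propose would not detect any discrepancy at all. A direct check with the structure equations $d\phi^1=0$, $d\phi^2=\tfrac{b}{4}(\phi^{12}+\phi^{1\bar2}+\phi^{2\bar1}-\phi^{\bar1\bar2})$ shows that the left-invariant subspaces of $\H^{p,q}_{\delbar}$ and $\H^{p,q}_{BC}$ coincide at every bidegree (for instance, at $(0,1)$ both equal $\C\langle\bar\phi^1\rangle$, and at $(2,1)$ both equal $\C\langle\phi^{12\bar2}\rangle$; note also that $\del$ vanishes identically on $A^{2,1}$ in real dimension $4$, so your hoped-for witness with $\del\alpha\neq0$ cannot arise there). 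The paper's proof proceeds instead by writing arbitrary forms with smooth coefficients, using a maximum-principle/elliptic argument to pin down $\H^{0,1}_{BC}=\C\langle\bar\phi^1\rangle$ exactly, and then exhibiting the non--left-invariant $\sigma\in\H^{0,1}_{\delbar}\setminus\H^{0,1}_{BC}$; analogous Fourier-series computations handle $(2,1)$ and $(1,2)$. To repair your argument you must abandon the left-invariant reduction and work with genuine PDE on $M$.
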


Finally, we recall a very recent definition of Bott-Chern cohomology for almost complex manifolds, \cite{CPS}, obtaining a natural injection of some spaces of Bott-Chern harmonic forms into this new Bott-Chern cohomology.

For other results concerning Bott-Chern-like harmonic forms on almost complex manifolds, equipped with cohomological counterparts, see \cite{TT0}.\smallskip

The present paper is organized in the following way. 
In section \ref{preliminaries}, we review some basic facts on almost complex manifolds and elliptic differential operators. 
Section \ref{bc-aeppli} is devoted to the definition and to the proof of the fundamental properties of the Bott-Chern Laplacian in the almost complex setting.
In section \ref{sec-h11}, we study Bott-Chern harmonic $(1,1)$-forms on almost Hermitian 4-manifolds, proving Theorem \ref{thm-11}.
In section \ref{bc-harmonic-kodaira}, we describe a family of almost K\"ahler structures on the Kodaira-Thurston manifold, comparing the two spaces $\H^{p,q}_{BC}$ and $\H^{p,q}_{\delbar}$ and proving Corollary \ref{cor-kod-2}. 
In section \ref{sec-lck}, we describe an almost complex structure on a hyperelliptic surface, endowed with a strictly locally conformally almost K\"ahler metric, such that $h^{1,1}_{BC}=b^-+1$.
Finally, in section \ref{cohomology}, we recall a very recent definition of Bott-Chern cohomology for almost complex manifolds by Coelho, Placini and Stelzig \cite{CPS}, and briefly analyse its relation with the space of Bott-Chern harmonic forms (see Proposition \ref{BC-cohomology}).

\medskip\medskip
\noindent{\em Acknowledgments.}
We are sincerely grateful to Tom Holt, Nicoletta Tardini and Weiyi Zhang for interesting comments and useful conversations on the subject of the paper. In particular, we would like to thank Tom Holt for having suggested to us that the characterization \eqref{eq-11-bc} of the space of Bott-Chern harmonic $(1,1)$-forms $\H^{1,1}_{BC}$, appearing in the first part of the proof of Theorem \ref{thm-11}, actually yields that the dimension $h^{1,1}_{BC}$ of $\H^{1,1}_{BC}$ can be only equal to either $b^-+1$ or $b^-$. 

The authors want also to express their gratitude to the anonymous Referee for his/her useful suggestions, which led to a better presentation of the results presented in the paper.

\section{Preliminaries}\label{preliminaries}
Throughout this paper, we will only consider connected manifolds without boundary.

Let $(M,J)$ be an almost complex manifold of dimension $2n$, i.e., a $2n$-differentiable manifold together with an almost complex structure $J$, that is $J\in\End(TM)$ and $J^2=-\id$. The complexified tangent bundle $T_{\C}M=TM\otimes\C$ decomposes into the two eigenspaces of $J$ associated to the eigenvalues $i,-i$, which we denote respectively by $T^{1,0}M$ and $T^{0,1}M$, giving
\begin{equation*}
T_{\C}M=T^{1,0}M\oplus T^{0,1}M.
\end{equation*}
Denoting by $\Lambda^{1,0}M$ and $\Lambda^{0,1}M$ the dual vector bundles of $T^{1,0}M$ and $T^{0,1}M$, respectively, we set
\begin{equation*}
\Lambda^{p,q}M=\bigwedge^p\Lambda^{1,0}M\wedge\bigwedge^q\Lambda^{0,1}M
\end{equation*}
to be the vector bundle of $(p,q)$-forms, and let $A^{p,q}=A^{p,q}(M)=\Gamma(\Lambda^{p,q}M)$ be the space of smooth sections of $\Lambda^{p,q}M$. We denote by $A^k=A^k(M)=\Gamma(\Lambda^{k}M)$ the space of $k$-forms. Note that $\Lambda^{k}M\otimes\C=\bigoplus_{p+q=k}\Lambda^{p,q}M$.

Let $f\in\cinf(M,\C)$ be a smooth function on $M$ with complex values. Its differential $df$ is contained in $A^1\otimes\C=A^{1,0}\oplus A^{0,1}$. On complex 1-forms, the exterior differential acts as
\[
d:A^1\otimes\C\to A^2\otimes\C=A^{2,0}\oplus A^{1,1}\oplus A^{0,2}.
\]
 Therefore, it turns out that the differential operates on $(p,q)$-forms as
\begin{equation*}
d:A^{p,q}\to A^{p+2,q-1}\oplus A^{p+1,q}\oplus A^{p,q+1}\oplus A^{p-1,q+2},
\end{equation*}
where we denote the four components of $d$ by
\begin{equation*}
d=\mu+\del+\delbar+\c\mu.
\end{equation*}
From the relation $d^2=0$, we derive
\begin{equation*}
\begin{cases}
\mu^2=0,\\
\mu\del+\del\mu=0,\\
\del^2+\mu\delbar+\delbar\mu=0,\\
\del\delbar+\delbar\del+\mu\c\mu+\c\mu\mu=0,\\
\delbar^2+\c\mu\del+\del\c\mu=0,\\
\c\mu\delbar+\delbar\c\mu=0,\\
\c\mu^2=0.
\end{cases}
\end{equation*}

Let $(M,J)$ be an almost complex manifold. If the almost complex structure $J$ is induced from a complex manifold structure on $M$, then $J$ is called integrable. It is equivalent to the decomposition of the exterior differential as $d=\del+\delbar$.

A Riemannian metric on $M$ for which $J$ is an isometry is called almost Hermitian.
Let $g$ be an almost Hermitian metric, the $2$-form $\omega$ such that
\begin{equation*}
\omega(u,v)=g(Ju,v)\ \ \forall u,v\in\Gamma(TM)
\end{equation*}
is called the fundamental form of $g$. We will call $(M,J,g,\omega)$ an almost Hermitian manifold.
We denote by $h$ the Hermitian extension of $g$ on the complexified tangent bundle $T_\C M$, and by the same symbol $g$ the $\C$-bilinear symmetric extension of $g$ on $T_\C M$. Also denote by the same symbol $\omega$ the $\C$-bilinear extension of the fundamental form $\omega$ of $g$ on $T_\C M$. 
Thanks to the elementary properties of the two extensions $h$ and $g$, we may want to consider $h$ as a Hermitian operator $T^{1,0}M\times T^{1,0}M\to\C$ and $g$ as a $\C$-bilinear operator $T^{1,0}M\times T^{0,1}M\to\C$.
Recall that $h(u,v)=g(u,\bar{v})$ for all $u,v\in \Gamma(T^{1,0}M)$.

Let $(M,J,g,\omega)$ be an almost Hermitian manifold of real dimension $2n$. Extend $h$ on $(p,q)$-forms and denote the Hermitian inner product by $\langle\cdot,\cdot\rangle$.
Let $*:A^{p,q}(M)\lr A^{n-q,n-p}(M)$ the $\C$-linear extension of the standard Hodge $*$ operator on Riemannian manifolds with respect to the volume form $\vol=\frac{\omega^n}{n!}$, i.e., $*$ is defined by the relation
\[
\alpha\wedge{*\c\beta}=\langle\alpha,\beta\rangle\vol\ \ \ \forall\alpha,\beta\in A^{p,q}.
\]
Then the operators
\begin{equation*}
d^*=-*d*,\ \ \ \mu^*=-*\c\mu*,\ \ \ \del^*=-*\delbar*,\ \ \ \delbar^*=-*\del*,\ \ \ \c\mu^*=-*\mu*,
\end{equation*}
are the formal adjoint operators respectively of $d,\mu,\del,\delbar,\c\mu$. Recall $\Delta_{d}=dd^*+d^*d$ is the Hodge Laplacian, and, as in the integrable case, set 
\begin{equation*}
\Delta_{\del}=\del\del^*+\del^*\del,\ \ \ \Delta_{\delbar}=\delbar\delbar^*+\delbar^*\delbar,
\end{equation*}
respectively as the $\del$ and $\delbar$ Laplacians.

If $M$ is compact, then we easily deduce the following relations
\begin{equation*}
\begin{cases}
\Delta_{d}\alpha=0\ &\iff\ d\alpha=0,\ d*\alpha=0,\ \forall\alpha\in A^k\\
\Delta_{\del}\alpha=0\ &\iff\ \del\alpha=0,\ \delbar*\alpha=0,\ \forall\alpha\in A^{p,q}\\
\Delta_{\delbar}\alpha=0\ &\iff\ \delbar\alpha=0,\ \del*\alpha=0,\ \forall\alpha\in A^{p,q}
\end{cases}
\end{equation*}
which characterizes the spaces of harmonic forms
\begin{equation*}
\H^{k}_{d},\ \ \ \H^{p,q}_{\del},\ \ \ \H^{p,q}_{\delbar},
\end{equation*}
defined as the spaces of forms which are in the kernel of the associated Laplacians.
All these Laplacians are elliptic operators on the almost Hermitian manifold $(M,J,g,\omega)$, implying that all the spaces of harmonic forms are finite dimensional when the manifold is compact. Denote by
\begin{equation*}
b^{k},\ \ \ h^{p,q}_{\del},\ \ \ h^{p,q}_{\delbar}
\end{equation*}
respectively the real dimension of $\H^k_d$ and the complex dimensions of $\H^{p,q}_{\del},\H^{p,q}_{\delbar}$.

Since we will need to use the maximum principle for second order uniformly elliptic differential operators, let us recall some definitions and the results which will be useful.
Let $M$ be a differentiable manifold of dimension $m$, and let $E,F$ be $\K$-vector bundles over $M$, with $\K=\R$ or $\K=\C$, $\rank E=r$, $\rank F=s$.
A differential operator of order $l$ from $E$ to $F$ is a $\K$-linear operator $P:\Gamma(M,E)\to \Gamma(M,F)$ of the form
\begin{equation*}
Pu(x)=\sum_{\lv\alpha\rv\le l}a_\alpha(x)D^\alpha u(x)\ \ \ \forall x\in\Omega,
\end{equation*}
where $E_{|\Omega}\simeq\Omega\times\K^r$, $F_{|\Omega}\simeq\Omega\times\K^s$ are trivialized locally on some open chart $\Omega\subset M$ equipped with local coordinates $x^1,\dots,x^{m}$, and the functions 
\begin{equation*}
a_\alpha(x)=(a_{\alpha ij}(x))_{1\le i\le s,1\le j\le r}
\end{equation*}
are $s\times r$ matrices with smooth coefficients on $\Omega$. Here $D^\alpha=(\del/\del x^1)^{\alpha_1}\dots(\del/\del x^{m})^{\alpha_{m}}$, and $u=(u_j)_{1\le j\le r}$, $D^\alpha u=(D^\alpha u_j)_{1\le j\le r}$ are viewed as column matrices. Moreover, we require $a_\alpha\nequiv0$ for some open chart $\Omega\subset M$ and for some $\lv\alpha\rv= l$.

Let $P:\Gamma(M,E)\to \Gamma(M,F)$ be a $\K$-linear differential operator of order $l$ from $E$ to $F$. We define the principal symbol of $P$ as the operator
\begin{equation*}
\sigma_P:T^*M\to \Hom(E,F)\ \ \ (x,\xi)\mapsto\sum_{\lv\alpha\rv= l}a_\alpha(x)\xi^\alpha,
\end{equation*}
where $\xi^\alpha=(\xi_1)^{\alpha_1}\dots(\xi_m)^{\alpha_m}$, given that $\xi=(\xi_1,\dots,\xi_m)$.
Note that, if $u\in\Gamma(M,E)$ is a smooth section of $E$ and $f\in\cinf(M)$ is a smooth real valued function, then
\begin{equation}\label{eq-sym}
P(f^lu)(x)=l!\sigma_P(x,df(x))(u(x)).
\end{equation}
We say that $P$ is elliptic if $\sigma_P(x,\xi)\in\Hom(E_x,F_x)$ is an isomorphism for every $x\in M$ and $0\ne\xi\in T_x^*M$. By (\ref{eq-sym}), we observe that $P$ is elliptic if and only if for all $x\in M$, $u\in\Gamma(M,E)$ and $f\in\cinf(M)$ such that $u(x)\ne0$, $f(x)=0$ and $df(x)\ne0$ we have
\begin{equation*}
P(f^lu)(x)\ne0.
\end{equation*}
Let $E=F$ and consider a Riemannian or a Hermitian metric $g$ on $E$.
We say that $P$ is strongly elliptic if $l=2k$ and there exists $C>0$ such that
\begin{equation*}
(-1)^k\real( g(\sigma_P(x,\xi)(u(x)),u(x))) \ge C\lv\xi\rv^{2k}g(u(x),u(x))
\end{equation*}
for all $x\in M$, $u\in\Gamma(M,E)$ and $\xi\in T^*M$, see \cite[Definition 4.2]{F}.

We will make use of the following statement of the maximum principle for strongly elliptic operators of second order, see \cite[Chapter 6, Section 4, Theorem 3]{E}.
\begin{theorem}
Let $\Omega\subset M$ be a relatively compact domain, with $\c\Omega$ contained in a local chart, and let $P:\cinf(\Omega)\to\cinf(\Omega)$ be a strongly elliptic $\R$-linear differential operator of order $2$ without zero order terms, i.e., such that $P(1)=0$. If $Pu=0$ in $\Omega$ and $u\in\mathcal{C}(\c\Omega)$ attains its maximum or minimum over $\c\Omega$ at an interior point, then $u$ is constant within $\Omega$.
\end{theorem}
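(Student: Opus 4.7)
The plan is the classical Hopf approach to the strong maximum principle, carried out in a single coordinate chart containing $\c\Omega$. In such a chart $P$ reads
\[
Pu = a^{ij}(x)\partial_i\partial_j u + b^i(x)\partial_i u,
\]
with no zero-order term, since $P(1)=0$. Strong ellipticity with $k=1$ in the sense of the paper reads $-a^{ij}(x)\xi_i\xi_j \geq C|\xi|^2$; replacing $P$ by $-P$ if necessary (which preserves every hypothesis), I may assume the symmetric matrix $(a^{ij}(x))$ is uniformly positive definite on $\c\Omega$. The argument then splits into three steps: the weak maximum principle, the Hopf boundary-point lemma, and the strong maximum principle obtained by combining them. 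The minimum case reduces to the maximum case by replacing $u$ with $-u$, which solves the same equation with the same structural hypotheses.

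For the weak maximum principle, I would first treat the strict case: if $Pu>0$ in $\Omega$, an interior maximum point $x_0$ would force $\nabla u(x_0)=0$ and the Hessian to be negative semi-definite, whence $a^{ij}(x_0)\partial_i\partial_j u(x_0)\leq 0$ by positivity of $(a^{ij})$, contradicting $Pu(x_0)>0$. To pass from $Pu=0$ to the strict version, I would use the auxiliary function $w(x)=e^{\gamma x_1}$, choosing $\gamma$ large (in terms of the ellipticity constant and $\sup_{\c\Omega}|b^1|$) so that $Pw>0$ on $\Omega$. Then $P(u+\epsilon w)>0$ for every $\epsilon>0$, the strict argument applies to $u+\epsilon w$, and letting $\epsilon\to 0^+$ yields $\sup_{\c\Omega} u=\sup_{\partial\Omega} u$.

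Next comes Hopf's boundary-point lemma: if $B=B_R(y)$ with $\c B\subset\Omega$, $u<M$ in $B$ and $u(x_0)=M$ at some $x_0\in\partial B$, then the outward normal derivative at $x_0$ is strictly positive. The classical barrier is
\[
v(x)=e^{-\alpha|x-y|^2}-e^{-\alpha R^2},
\]
and a direct computation shows that choosing $\alpha$ large (in terms of the ellipticity constant and of the sup-norms of $a^{ij}$ and $b^i$ on $\c B$) makes $Pv\geq 0$ on the annulus $B\setminus\c B_{R/2}(y)$. One then picks $\epsilon>0$ small enough that $u-M+\epsilon v\leq 0$ on the compact inner sphere (using $u<M$ there) and on $\partial B$, applies the weak maximum principle to $u-M+\epsilon v$ on the annulus, and differentiates along the outward normal at $x_0$ to obtain $\partial_\nu u(x_0)\geq -\epsilon\,\partial_\nu v(x_0)>0$.

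To conclude, suppose $u$ is not constant and attains its maximum $M$ at an interior point. Then $\Omega_-:=\{x\in\Omega:u(x)<M\}$ is a nonempty proper open subset of $\Omega$, so its boundary relative to $\Omega$ meets $\{u=M\}$. Pick $y\in\Omega_-$ closer to $\{u=M\}$ than to $\partial\Omega$ and let $B_R(y)$ be the largest open ball contained in $\Omega_-$; then some $x_0\in\partial B_R(y)$ lies in $\Omega$ with $u(x_0)=M$. Hopf's lemma yields $\nabla u(x_0)\neq 0$, contradicting the fact that $x_0$ is an interior maximum of $u$. The main obstacle I expect is the quantitative barrier construction in Hopf's lemma, where the parameter $\alpha$ must be chosen explicitly in terms of the uniform ellipticity constant and the $L^\infty$ bounds on $a^{ij}$ and $b^i$ over the compact ball $\c B$.
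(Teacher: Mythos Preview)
Your argument is the classical Hopf proof of the strong maximum principle and is correct as outlined. However, the paper does not give its own proof of this theorem: it is stated as a known result and attributed to Evans \cite[Chapter~6, Section~4, Theorem~3]{E}, so there is no ``paper's proof'' to compare against. Your approach is essentially the one in Evans (weak maximum principle via an exponential auxiliary function, Hopf's lemma with the barrier $e^{-\alpha|x-y|^2}-e^{-\alpha R^2}$, then the connectedness argument), and the reduction you make---passing to a single chart, replacing $P$ by $-P$ to make $(a^{ij})$ positive definite, and handling the minimum case via $u\mapsto -u$---is exactly right given the paper's sign convention for strong ellipticity.
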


\section{Bott-Chern and Aeppli Laplacians}\label{bc-aeppli}

Let $(M,J,g,\omega)$ be an almost Hermitian manifold. As in the integrable setting, we define
\begin{equation*}
\tilde\Delta_{BC}=
\del\delbar\delbar^*\del^*+
\delbar^*\del^*\del\delbar+\del^*\delbar\delbar^*\del+\delbar^*\del\del^*\delbar
+\del^*\del+\delbar^*\delbar,
\end{equation*}
and
\begin{equation*}
 \tilde\Delta_{A}= \del\delbar\delbar^*\del^*+
\delbar^*\del^*\del\delbar+
\del\delbar^*\delbar\del^*+\delbar\del^*\del\delbar^*+
\del\del^*+\delbar\delbar^*,
\end{equation*}
and still call them the Bott-Chern and the Aeppli Laplacian, respectively.
Note that
\begin{equation}\label{bc-a-duality}
*\tilde\Delta_{BC}=\tilde\Delta_{A}*\ \ \ \tilde\Delta_{BC}*=*\tilde\Delta_{A}.
\end{equation}

If $M$ is compact, then we easily deduce the following relations
\begin{equation*}
\begin{cases}
\tilde\Delta_{BC}\alpha=0\ \iff\  \del\alpha=0,\ \delbar\alpha=0,\ \del\delbar*\alpha=0,\ \forall\alpha\in A^{p,q},\\
\tilde\Delta_{A}\alpha=0\ \iff\  \del*\alpha=0,\ \delbar*\alpha=0,\ \del\delbar\alpha=0,\ \forall\alpha\in A^{p,q},
\end{cases}
\end{equation*}
which characterize the spaces of harmonic $(p,q)$-forms
\begin{equation*}
\H^{p,q}_{BC},\ \ \ \H^{p,q}_{A},
\end{equation*}
defined as the spaces of $(p,q)$-forms which are in the kernel of the associated Laplacians.
\begin{remark}
By equation \eqref{bc-a-duality}, note that $*\H^{p,q}_{BC}=\H^{n-q,n-p}_{A}$ and $*\H^{p,q}_{A}=\H^{n-q,n-p}_{BC}$. In the following, we will study only the spaces $\H^{p,q}_{BC}$ on an almost complex manifolds; this is sufficient to describe also the spaces $\H^{p,q}_{A}$.
\end{remark}

We are interested in studying the kernel of the Bott-Chern Laplacian $\tilde\Delta_{BC}$ on almost complex manifolds. The kernel of an elliptic operator is finite dimensional on a compact manifold. Therefore, the first thing we verify is that $\tilde\Delta_{BC}$ is elliptic. The proofs known by the authors of the ellipticity of $\tilde\Delta_{BC}$, see, e.g., \cite[Proposition 5]{KS} by Kodaira and Spencer or \cite[Page 8]{S} by Schweitzer, make use of local complex coordinates to compute explicitly the symbol of $\tilde\Delta_{BC}$, therefore do not hold anymore on almost complex manifolds. Nonetheless, these proofs could be adapted to compute the symbol in suitable local frames on almost complex manifolds.
\begin{proposition}\label{prop-ell}
Let $(M,g,J,\omega)$ be an almost Hermitian manifold of real dimension $2n$. The Bott-Chern Laplacian $\tilde\Delta_{BC}$ is elliptic.
\end{proposition}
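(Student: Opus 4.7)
The plan is to compute the principal symbol of $\tilde\Delta_{BC}$ at an arbitrary point $(x,\xi)$ and verify its invertibility for $\xi\ne 0$, reducing the problem to an analysis that is algebraically identical to the integrable case. The starting observation is that in the decomposition $d=\mu+\del+\delbar+\bar\mu$, the bidegree-shifting components $\mu$ and $\bar\mu$ are of order zero (pointwise contractions with the Nijenhuis tensor), so they do not enter the principal symbol of $d$. Consequently the principal symbols of $\del$ and $\delbar$ are exactly as in the integrable setting: for $\xi\in T^*_xM$ decomposed as $\xi=\xi^{1,0}+\xi^{0,1}$,
\[
\sigma_{\del}(x,\xi)(\alpha)=\xi^{1,0}\wedge\alpha,\qquad
\sigma_{\delbar}(x,\xi)(\alpha)=\xi^{0,1}\wedge\alpha,
\]
and $\sigma_{\del^*},\sigma_{\delbar^*}$ are the corresponding Hermitian-adjoint interior products with the metric duals.

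Next I would set $P:=\sigma_{\del}(x,\xi)$ and $Q:=\sigma_{\delbar}(x,\xi)$ and observe that, $\tilde\Delta_{BC}$ being of order $4$, only its four fourth-order summands contribute to the top-order symbol (the two second-order summands $\del^*\del+\delbar^*\delbar$ drop out). This yields
\[
\sigma_{\tilde\Delta_{BC}}(x,\xi)=PQQ^*P^*+Q^*P^*PQ+P^*QQ^*P+Q^*PP^*Q,
\]
and pairing with a form $\alpha$ gives the manifestly non-negative expression
\[
\langle\sigma_{\tilde\Delta_{BC}}(x,\xi)\alpha,\alpha\rangle=|Q^*P^*\alpha|^2+|PQ\alpha|^2+|Q^*P\alpha|^2+|P^*Q\alpha|^2.
\]

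To conclude invertibility I would argue that vanishing of this quantity forces $\alpha=0$ when $\xi\ne 0$. Choose a local unitary coframe $\epsilon_1,\ldots,\epsilon_n$ of $\Lambda^{1,0}M$ with $\epsilon_1$ proportional to $\xi^{1,0}$; then, up to positive scalars, $P$ and $Q$ act as wedge with $\epsilon_1$ and $\bar\epsilon_1$, while $P^*$, $Q^*$ are the dual interior products. Expanding $\alpha=\sum_{I,J}a_{IJ}\,\epsilon_I\wedge\bar\epsilon_J$, the four conditions $PQ\alpha=0$, $Q^*P^*\alpha=0$, $Q^*P\alpha=0$, $P^*Q\alpha=0$ force $a_{IJ}=0$ in the four cases $\{1\notin I,\,1\notin J\}$, $\{1\in I,\,1\in J\}$, $\{1\notin I,\,1\in J\}$, $\{1\in I,\,1\notin J\}$ respectively. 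These exhaust all multi-indices, so $\alpha=0$ and the symbol is injective, hence invertible.

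I expect the main conceptual point to be the opening observation: recognising that the non-integrability of $J$ is encoded exclusively in the zeroth-order pieces $\mu,\bar\mu$ of $d$, leaving the principal symbols of $\del,\delbar$ unchanged from the complex case. Once this is in hand, the remaining steps amount to a sum-of-squares identity followed by a straightforward bidegree bookkeeping in a local unitary coframe, just as in the integrable setting previously treated by Kodaira--Spencer \cite{KS} and Schweitzer \cite{S}.
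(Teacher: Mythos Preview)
Your proposal is correct and rests on exactly the same idea as the paper's proof: the non-integrable pieces $\mu,\bar\mu$ are zeroth order, so the principal symbols of $\del,\delbar$ (and hence of $\tilde\Delta_{BC}$) coincide with those in the integrable setting. The paper stops there and simply invokes the known ellipticity on complex manifolds, whereas you additionally spell out the sum-of-squares identity and the coframe bookkeeping to verify invertibility directly; this is a harmless elaboration rather than a different route.
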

\begin{proof}
To compute the symbol of $\tilde\Delta_{BC}$, choose a local coframe $\{\theta^1,\dots,\theta^n\}$ on $A^{1,0}$ such that the almost Hermitian metric is written 
\[
\omega=i\sum_{k=1}^n\theta^{k\c{k}}.
\]
We write a form $\alpha\in A^{p,q}$ locally as
\[
\alpha=\alpha_{i_1\dots i_pj_1\dots j_q}\theta^{i_1}\wedge\dots\wedge\theta^{\c{j_q}}.
\]
Its differential then acts as
\begin{align*}
d\alpha&=d\alpha_{i_1\dots j_q}\wedge\theta^{i_1}\wedge\dots\wedge\theta^{\c{j_q}}+\alpha_{i_1\dots j_q}d(\theta^{i_1}\wedge\dots\wedge\theta^{\c{j_q}})\\
&=\del\alpha_{i_1\dots j_q}\wedge\theta^{i_1}\wedge\dots\wedge\theta^{\c{j_q}}+\delbar\alpha_{i_1\dots j_q}\wedge\theta^{i_1}\wedge\dots\wedge\theta^{\c{j_q}}+\alpha_{i_1\dots j_q}d(\theta^{i_1}\wedge\dots\wedge\theta^{\c{j_q}}).
\end{align*}
In calculating the symbol, we are only interested in the highest order derivatives acting on $\alpha_{i_1\dots j_q}$.
Therefore, for the purpose of computing the symbol, we note that $\del$ and $\delbar$ behave like on a complex manifold. The same reasoning works for $\del^*$ and $\delbar^*$. Since $\tilde\Delta_{BC}$ is  elliptic on complex manifolds, this ends the proof.
\end{proof}
The same considerations in the proof of Proposition \ref{prop-ell} also prove that the $\del$, $\delbar$, and the Aeppli Laplacians are  elliptic, too.

Denote by
\begin{equation*}
h^{p,q}_{BC},\ \ \ h^{p,q}_{A}
\end{equation*}
respectively the finite complex dimensions of $\H^{p,q}_{BC}$ and of $\H^{p,q}_{A}$.

\section{Bott-Chern harmonic $(1,1)$-forms on almost Hermitian 4-manifolds}\label{sec-h11}

The goal of this section is to study the space of Bott-Chern harmonic forms of bidegree $(1,1)$ on almost Hermitian manifolds of real dimension $4$. We start noting that this space is a conformal invariant of the metric.

\begin{remark}\label{conformal}
Let $(M,J)$ be a compact almost complex manifold of real dimension $2n$.
Let $\tilde\omega$, $\omega=e^t\tilde\omega$, with $t\in\cinf(M)$, be two conformal almost Hermitian metrics. The two Hodge star operators behave, on the space $A^{p,q}$, as
\[
*_{\omega}=e^{t(n-p-q)}*_{\tilde\omega}.
\]
Therefore, when $p+q=n$, the space $\H^{p,q}_{BC}$ is a conformal invariant of almost Hermitian metrics, thanks to the characterization
\[
\tilde\Delta_{BC}\alpha=0\ \iff \del\alpha=0,\ \delbar\alpha=0,\ \del\delbar*\alpha=0,\ \ \ \forall\alpha\in A^{p,q}.
\]
In particular, $h^{p,q}_{BC}$ is also a conformal invariant of almost Hermitian metrics for $p+q=n$. 
This is especially true when $2n=4$ and $p=q=1$. 

By a remarkable result of Gauduchon, \cite{Ga}, for any given almost Hermitian metric $\tilde\omega$ on the compact almost complex $2n$-manifold $(M,J)$, there always exists a unique, up to homothety,  Gauduchon metric $\omega$ conformal to $\tilde\omega$, i.e., $\del\delbar\omega^{n-1}=0$. In particular, for $2n=4$, we have $\del\delbar\omega=0$.
\end{remark}

Let $(M,g)$ be a compact oriented Riemannian manifold of real dimension 4, and set 
$$\Lambda^-=\{\alpha\in \Lambda^2M\,:\,*\alpha=-\alpha\}
$$
the bundle of anti-self-dual $2$-forms. 
Denote by 
\[
\H^-=\{\alpha\in A^2\,:\,\Delta_d\alpha=0,\,*\alpha=-\alpha\},
\]
the subspace of harmonic anti-self-dual $2$-forms
and set $b^-=\dim_{\R}\H^-$.
Note that $b^-$ is metric independent: see \cite[Chapter 1]{DK} for its topological meaning.

\begin{remark}
Let $(M,J,g,\omega)$ be a compact almost Hermitian manifold of real dimension $4$. Note that the space of harmonic anti-self-dual complex valued $2$-forms $\H^-\otimes\C$ is indeed a subspace of $A^{1,1}$, which will be denoted by $\H^-_{\C}$.
We remark that every harmonic anti-self-dual $(1,1)$-form $\gamma\in \H^-_{\C}$, i.e., $d\gamma=0$ and $*\gamma=-\gamma$, is a Bott-Chern harmonic $(1,1)$-form. In fact, it holds $\del\gamma=0,$ $\delbar\gamma=0,$ $\del\delbar*\gamma=0$. Hence, $h^{1,1}_{BC}\ge b^-$.
\end{remark}

Now we can state and prove the following theorem, gaining a topological interpretation of the dimension $h^{1,1}_{BC}$.

\begin{theorem}\label{thm-11}
Let $(M,J,g,\omega)$ be a compact almost Hermitian manifold of real dimension $4$. Then either $h^{1,1}_{BC}=b^-$ or $h^{1,1}_{BC}=b^-+1$.
\end{theorem}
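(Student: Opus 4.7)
My plan is to reduce to the Gauduchon case via conformal invariance, decompose a $(1,1)$-form along the $\omega$-trace, and show that the Bott-Chern condition collapses to a scalar strongly elliptic equation whose only solutions are constants. By Remark \ref{conformal}, $\H^{1,1}_{BC}$ depends only on the conformal class of $g$, so I may assume from now on that $\del\delbar\omega=0$. On a real $4$-manifold every $\alpha\in A^{1,1}$ decomposes uniquely as $\alpha=f\omega+\alpha_0$, with $f\in\cinf(M,\C)$ and $\alpha_0$ primitive, equivalently $*\alpha_0=-\alpha_0$; the primitive $(1,1)$-forms form $\Lambda^-\otimes\C$, and $*\alpha=f\omega-\alpha_0$.

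The first step is the key characterization of $\H^{1,1}_{BC}$. Since $A^{3,0}=A^{0,3}=0$ when $n=2$, one has $\mu\alpha=\c\mu\alpha=0$ for any $\alpha\in A^{1,1}$, so $d\alpha=\del\alpha+\delbar\alpha$ and hence $d\alpha=0$ is equivalent to $\del\alpha=\delbar\alpha=0$. From $d\alpha=0$ I obtain $\del\delbar\alpha=\del\delbar(f\omega)+\del\delbar\alpha_0=0$, while the residual Bott-Chern condition reads $\del\delbar*\alpha=\del\delbar(f\omega)-\del\delbar\alpha_0=0$; adding and subtracting gives
\[
\alpha\in\H^{1,1}_{BC}\iff d\alpha=0\ \text{ and }\ \del\delbar(f\omega)=0.
\]
Expanding under the Gauduchon assumption,
\[
\del\delbar(f\omega)=\del\delbar f\wedge\omega+\del f\wedge\delbar\omega-\delbar f\wedge\del\omega,
\]
I rewrite this $(2,2)$-form as $L(f)\,\tfrac{\omega^2}{2}$, where $L$ is a second-order $\C$-linear scalar operator whose principal part is a positive multiple of $\mathrm{tr}_\omega(i\del\delbar\cdot)$, hence strongly elliptic; moreover $L(1)=0$ since $\del\delbar\omega=0$, so $L$ has no zero-order term.

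The second step, which is the observation attributed to T.~Holt in the acknowledgments, is to squeeze a dichotomy out of this. Using $d^2=0$ together with $A^{3,0}=A^{0,3}=0$, every $(1,1)$-form $\eta$ on a $4$-manifold satisfies $\delbar\del\eta=-\del\delbar\eta$; combined with $\overline{\del\delbar(u\omega)}=\delbar\del(u\omega)$ for real $u$, this forces $\del\delbar(u\omega)$ to be purely imaginary, so for $u\in\cinf(M,\R)$ the equation $L(u)=0$ is a genuine real strongly elliptic scalar equation of order two without zero-order term. The strong maximum principle of the Preliminaries applied to $u=\real f$ and $u=\img f$ separately then forces $f$ to be a complex constant $c$. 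Writing $\alpha=c\omega+\alpha_0$, the condition $d\alpha=0$ becomes $c\,d\omega+d\alpha_0=0$, so the $\C$-linear map $\Phi\colon\H^{1,1}_{BC}\to\C$, $\alpha\mapsto c$, has kernel the closed primitive $(1,1)$-forms $\H^-\otimes\C$ of complex dimension $b^-$ and image at most one-dimensional, yielding $h^{1,1}_{BC}\in\{b^-,b^-+1\}$. The most delicate point I foresee is the rigorous application of the maximum principle to the complex-valued $f$, since the first-order coefficients of $L$ are genuinely complex (from $\del\omega$ and $\delbar\omega$ in the non-almost-K\"ahler Gauduchon case); the reduction to real scalar equations must go through the identity $\delbar\del=-\del\delbar$ on $(1,1)$-forms, which rests crucially on $A^{3,0}=A^{0,3}=0$ in real dimension $4$.
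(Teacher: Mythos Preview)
Your proposal is correct and follows essentially the same route as the paper: reduce to a Gauduchon metric by conformal invariance, decompose along $\omega$, show via strong ellipticity and the maximum principle that the trace function $f$ must be constant, and then extract the dichotomy. The only differences are cosmetic---you reach $\del\delbar(f\omega)=0$ by adding and subtracting $\del\delbar\alpha=0$ and $\del\delbar*\alpha=0$ rather than by substituting $\delbar\phi=0$ into the expansion of $\del\delbar*\phi$, and you phrase the final step as the rank computation of the linear map $\Phi$ instead of the paper's explicit case split, but the argument is the same.
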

\begin{proof}
Since $h^{1,1}_{BC}$ is a conformal invariant of the metric, up to a conformal change of the Hermitian metric $g$, we can assume in this proof that $\omega$ is Gauduchon, i.e., $\del\delbar\omega=0$.

We divide the proof in two steps.

(I) First, we prove that the space of Bott-Chern harmonic $(1,1)$-forms is
\begin{equation}\label{eq-11-bc}
\H^{1,1}_{BC}=\{f\omega+\gamma\in A^{1,1}\,|\,f\in\C,\ *\gamma=-\gamma,\ d(f\omega+\gamma)=0\}.
\end{equation}

(II) Then, we prove that the complex dimension of $\H^{1,1}_{BC}$ can only be equal to either $b^-$ or $b^-+1$.

(I) From the Lefschetz decomposition for $2$-forms, see \cite[Proposition 1.2.30]{H}, one gets the following decomposition:
\begin{equation}\label{eq-dec11}
\Lambda^{1,1}M=\C(\omega)\oplus(\Lambda^-\otimes\C).
\end{equation}
Let $\phi\in\H^{1,1}_{BC}$. By equation (\ref{eq-dec11}), we have $\phi=f\omega+\gamma$, where $f$ is a smooth function with complex values on $M$ and $*\gamma=-\gamma$.
To prove the characterization of $\H^{1,1}_{BC}$, we claim that $f$ is a complex constant. Note that
\begin{align}
\del\phi=0&\ \iff\ \del f\wedge\omega+f\del\omega+\del\gamma=0,\label{eq-gbc11-1}\\
\delbar\phi=0&\ \iff\ \delbar f\wedge\omega+f\delbar\omega+\delbar\gamma=0,\label{eq-gbc11-2}\\
\delbar^*\del^*\phi=0&\ \iff\ \del\delbar*(f\omega+\gamma)=0.\label{eq-gbc11-3}
\end{align}
Expanding condition (\ref{eq-gbc11-3}), using condition (\ref{eq-gbc11-2}) and $\del\delbar\omega=0$, we get
\begin{align*}
0=\del\delbar*(f\omega+\gamma)&=\del\delbar({f}\omega-\gamma)\\
&=\del(\delbar f\wedge\omega+f\delbar\omega-\delbar\gamma)\\
&=2\del(\delbar f\wedge\omega+f\delbar\omega)\\
&=2\del\delbar{f}\wedge\omega-2\delbar{f}\wedge\del\omega+2\del{f}\wedge\delbar\omega.
\end{align*}

We claim that the differential operator $P:\cinf(M,\C)\to\cinf(M,\C)$ defined by
\[
P:f\mapsto -i*(\del\delbar{f}\wedge\omega-\delbar{f}\wedge\del\omega+\del{f}\wedge\delbar\omega)
\]
is strongly elliptic, since its principal part is given by 
\[
-i*(\del\delbar{f}\wedge\omega).
\]
Let us then verify that the differential operator $L:\cinf(M,\C)\to\cinf(M,\C)$ defined by
\[
L:f\mapsto -i*(\del\delbar f\wedge\omega)
\]
is strongly elliptic. 
Choose a local coframe $\{\zeta^1,\zeta^2\}$ of bi-degree $(1,0)$ centered in a point $m\in M$ and such that the almost Hermitian metric is written
\[
\omega=i(\zeta^{1\c1}+\zeta^{2\c2}).
\]
Let $\{V_1,V_2\}$ be the corresponding dual frame. We have
\begin{align*}
\del\delbar f&=\del(\c{V}_1(f)\c\zeta^1+\c{V}_2(f)\c\zeta^2)\\
&=V_1(\c{V}_1(f))\zeta^{1\c1}+V_2(\c{V}_1(f))\zeta^{2\c1}+\\
&+V_1(\c{V}_2(f))\zeta^{1\c2}+V_2(\c{V}_2(f))\zeta^{2\c2}+\\
&+\c{V}_1(f)\del\c\zeta^1+\c{V}_2(f)\del\c\zeta^2.
\end{align*}
Wedging $\del\delbar f$ together with $\omega$, we get
\begin{equation*}
\del\delbar f\wedge\omega=i\left(V_1(\c{V}_1(f))+V_2(\c{V}_2(f))+R(f)\right)\zeta^{1\c12\c2},
\end{equation*}
where $R$ is a differential operator which involves at most first order derivatives of $f$.
Since $\vol=\zeta^{12\c1\c2}$, it follows that
\[
L(f)=-\left(V_1(\c{V}_1(f))+V_2(\c{V}_2(f))+R(f)\right),
\]
which is strongly elliptic since $-V_1\c{V}_1-V_2\c{V}_2$ is strongly elliptic, proving the claim.
Note that $L(f)=i\langle\del\delbar f,\omega\rangle$ is equal, up to a factor $-2$, to the complex Laplacian by Gauduchon, \cite{Ga}.

Let us prove that the function $f\in\ker(P)$ is constant. Note that $P$ is a real differential operator, i.e., $P(\c{f})=\c{P(f)}$. Hence, $f\in\ker(P)$ iff $\real{f}\in\ker(P)$ and $\img{f}\in\ker(P)$. By considering $\real{f}$ and $\img{f}$ instead of $f$, for the moment we may assume that $f$ is real valued. Then, $f:M\to\R$ has a maximum and a minimum. Let $m_0\in M$ be a maximum point for $f$ and set $f(m_0)=N$. Let $U\ni m_0$ be a local chart and $r>0$ such that $\c{B(m_0,r)}\subset U$. The differential operator $P$ is strongly elliptic on $M$, therefore it is strongly elliptic on $B(m_0,r)$. Since $f\in\ker(P)$, by the maximum principle it follows that $f$ is constant on $B(m_0,r)$. Since $\{m\in M\,:\,f(m)=N\}$ is both open and closed, $f$ is constant on $M$.

Therefore, we have just proved
\[
\H^{1,1}_{BC}\subset\{f\omega+\gamma\in A^{1,1}\,|\,f\in\C,\ *\gamma=-\gamma,\ d(f\omega+\gamma)=0\}.
\]
Vice versa, if $\phi=f\omega+\gamma\in A^{1,1}$, with $f\in\C$, $*\gamma=-\gamma$, and $d(f\omega+\gamma)=0$, then a straightforward computation shows that \eqref{eq-gbc11-1}, \eqref{eq-gbc11-2} and \eqref{eq-gbc11-3} hold, providing the converse inclusion $\supset$. Therefore \eqref{eq-11-bc} is proved.

(II) Now, let us prove that either $h^{1,1}_{BC}=b^-$ or $h^{1,1}_{BC}=b^-+1$ holds. We have two possible cases: \newline
(a) there exists an element $f_0\omega+\gamma_0\in\H^{1,1}_{BC}$ such that 
$$f_0\in\C\setminus\{0\}, \quad *\gamma_0=-\gamma_0,\quad d(f_0\omega+\gamma_0)=0;
$$
(b) for any given element $f\omega+\gamma\in\H^{1,1}_{BC}$ we have $f=0$.\smallskip

In case (a), we claim that
\[
\H^{1,1}_{BC}=\{f(f_0\omega+\gamma_0)+\gamma\in A^{1,1}\,|\,f\in\C,\ *\gamma=-\gamma,\ d\gamma=0\},
\]
which yields $h^{1,1}_{BC}=b^-+1$. The inclusion $\supset$ is immediate. Indeed,
$$
d(f(f_0\omega+\gamma_0)+\gamma)=fd(f_0\omega+\gamma_0)+d\gamma=0,
$$
and $*(f\gamma_0+\gamma)=-f\gamma_0-\gamma$.\smallskip

To prove the converse inclusion $\subset$, let $f_1\omega+\gamma_1\in\H^{1,1}_{BC}$, i.e., $f_1\in\C$, $*\gamma_1=-\gamma_1$ and $d(f_1\omega+\gamma_1)=0$. We compute
\[
f_1\omega+\gamma_1=\frac{f_1}{f_0}(f_0\omega+\gamma_0)+\gamma_1-\frac{f_1}{f_0}\gamma_0=f(f_0\omega+\gamma_0)+\gamma,
\]
where we set $f=\frac{f_1}{f_0}$ and $\gamma=\gamma_1-\frac{f_1}{f_0}\gamma_0$. Note that 
$$f\in\C, \quad *\gamma=-\gamma,\quad d\gamma=-f_1d\omega+\frac{f_1}{f_0}f_0d\omega=0,
$$
proving the claim. 

In case (b), since for every element $f\omega+\gamma\in\H^{1,1}_{BC}$ we have $f=0$, it follows that
$\H^{1,1}_{BC}$ coincides with the space of harmonic and anti-self-dual $(1,1)$-forms $\H^-_{\C}$, yielding $h^{1,1}_{BC}=b^-$.\newline
The theorem is proved.
\end{proof}

We specialize Theorem \ref{thm-11} when the Hermitian metric is almost K\"ahler, yielding that $h^{1,1}_{BC}$ is independent of the choice of almost K\"ahler metrics $g$ compatible with $J$. The following corollary is the Bott-Chern analogue of \cite[Proposition 6.1]{HZ} by Holt and Zhang.

\begin{corollary}\label{cor-11}
Let $(M,J,g,\omega)$ be a compact almost K\"ahler manifold of real dimension $4$.
Then, $h^{1,1}_{BC}=b^-+1$ and $\H^{1,1}_{BC}=\H^{1,1}_{\delbar}$.
\end{corollary}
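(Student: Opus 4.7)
The plan is to specialize Theorem \ref{thm-11} to the almost K\"ahler setting by exhibiting a distinguished element of $\H^{1,1}_{BC}$. Since $d\omega = 0$, the fundamental form itself satisfies the characterization \eqref{eq-11-bc} with $f = 1$ and $\gamma = 0$, so $\omega \in \H^{1,1}_{BC}$. This realizes case (a) in the second half of the proof of Theorem \ref{thm-11}, from which $h^{1,1}_{BC} = b^- + 1$ follows immediately.

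For the identification $\H^{1,1}_{BC} = \H^{1,1}_{\delbar}$, I would first establish the inclusion $\H^{1,1}_{BC} \subseteq \H^{1,1}_{\delbar}$. Given $\alpha \in \H^{1,1}_{BC}$, the characterization \eqref{eq-11-bc} together with $d\omega = 0$ yields $\alpha = f\omega + \gamma$ with $f \in \C$ constant, $*\gamma = -\gamma$, and $d\gamma = 0$. The decomposition \eqref{eq-dec11} identifies $\C\omega$ as the self-dual part of $\Lambda^{1,1}M$ in real dimension four, so $*\omega = \omega$ and thus $*\alpha = f\omega - \gamma$. Consequently $d(*\alpha) = f\,d\omega - d\gamma = 0$. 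Projecting $d\alpha = 0$ onto bidegree $(1,2)$ gives $\delbar\alpha = 0$, while projecting $d(*\alpha) = 0$ onto bidegree $(2,1)$ gives $\del(*\alpha) = 0$. Both Dolbeault harmonicity conditions are therefore satisfied and $\alpha \in \H^{1,1}_{\delbar}$.

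To promote the inclusion to an equality, I would invoke the Holt--Zhang result \cite[Proposition 6.1]{HZ}, which asserts that $h^{1,1}_{\delbar} = b^- + 1$ on any compact almost K\"ahler $4$-manifold. Combined with the previous step, both $\H^{1,1}_{BC}$ and $\H^{1,1}_{\delbar}$ are finite-dimensional complex vector spaces of the same dimension $b^- + 1$, so the inclusion forces $\H^{1,1}_{BC} = \H^{1,1}_{\delbar}$. There is no substantive obstacle here, as the corollary is essentially a direct specialization of Theorem \ref{thm-11}; the only point worth flagging is the self-duality $*\omega = \omega$ in dimension four, which makes the passage from $d\alpha = 0$ to $d(*\alpha) = 0$ transparent and is precisely what breaks down in higher dimension or without the almost K\"ahler hypothesis.
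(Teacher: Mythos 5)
Your proposal is correct and follows essentially the same route as the paper: both specialize the characterization \eqref{eq-11-bc} using $d\omega=0$ to get $h^{1,1}_{BC}=b^-+1$, and both invoke \cite[Proposition 6.1]{HZ} for the Dolbeault side. The only cosmetic difference is that you deduce $\H^{1,1}_{BC}=\H^{1,1}_{\delbar}$ via an explicit inclusion plus a dimension count, whereas the paper simply observes that Holt--Zhang give the identical characterization of $\H^{1,1}_{\delbar}$, so the two spaces coincide outright.
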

\begin{proof}
By the characterization \eqref{eq-11-bc} of Theorem \ref{thm-11} and $d\omega=0$ we get the characterization
\[
\H^{1,1}_{BC}=\{f\omega+\gamma\in A^{1,1}\,|\,f\in\C,\ *\gamma=-\gamma,\ d\gamma=0\},
\]
yielding $h^{1,1}_{BC}=b^-+1$. By \cite[Proposition 6.1]{HZ}, the space $\H^{1,1}_{\delbar}$ has the same characterization as $\H^{1,1}_{BC}$, implying $\H^{1,1}_{BC}=\H^{1,1}_{\delbar}$.
\end{proof}

By Remark \ref{conformal}, note that the same thesis of Corollary \ref{cor-11} holds if the almost Hermitian metric is conformal to an almost K\"ahler metric.

As another consequence of Theorem \ref{thm-11}, we derive the following

\begin{corollary}\label{cor-hodge}
Let $(M,J,g,\omega)$ be a compact almost Hermitian manifold of real dimension $4$.
If $\del\delbar\omega=0$ and $d\omega\ne0$, then every Hodge harmonic $(1,1)$-form is anti-self-dual.
\end{corollary}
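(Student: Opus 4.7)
The plan is to derive this corollary almost immediately from Theorem \ref{thm-11}, using the additional information that a Hodge harmonic form satisfies $d*\alpha=0$ on top of $d\alpha=0$.

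First, I would show that every Hodge harmonic $(1,1)$-form is automatically Bott-Chern harmonic. Given $\alpha\in A^{1,1}$ with $\Delta_d\alpha=0$, one has both $d\alpha=0$ and $d^*\alpha=0$, the latter equivalent to $d*\alpha=0$. Decomposing $d\alpha=0$ by bidegree forces $\del\alpha=0$ and $\delbar\alpha=0$; decomposing $d*\alpha=0$ forces $\delbar*\alpha=0$ and hence $\del\delbar*\alpha=0$. Therefore $\alpha\in\H^{1,1}_{BC}$.

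Since $\del\delbar\omega=0$ by hypothesis, the characterization \eqref{eq-11-bc} from Theorem \ref{thm-11} applies directly: I may write $\alpha=f\omega+\gamma$ with $f\in\C$ a complex constant, $*\gamma=-\gamma$, and $d(f\omega+\gamma)=0$. In real dimension $4$ one has $*\omega=\omega$, so $*\alpha=f\omega-\gamma$, and the condition $d*\alpha=0$ reads $d(f\omega-\gamma)=0$. Adding and subtracting the two relations $d(f\omega\pm\gamma)=0$ yields
\[
d(f\omega)=0,\qquad d\gamma=0.
\]
Because $f$ is constant, $d(f\omega)=f\,d\omega$; combined with $d\omega\ne 0$ this forces $f=0$, so $\alpha=\gamma$ is anti-self-dual.

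There is no real obstacle here, since the heavy lifting has already been done in Theorem \ref{thm-11}: the only additional input needed is that Hodge harmonicity supplies the second condition $d*\alpha=0$, which kills the scalar component $f\omega$ as soon as $\omega$ fails to be closed. The only point requiring a moment's care is verifying that the hypothesis $\del\delbar\omega=0$ is precisely what allows the characterization \eqref{eq-11-bc} to be invoked without first performing a conformal change of metric, so that the notion of ``Hodge harmonic'' is preserved.
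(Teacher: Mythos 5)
Your proof is correct and follows essentially the same route as the paper: decompose $\alpha=f\omega+\gamma$ via Lefschetz, observe that Hodge harmonicity implies Bott-Chern harmonicity so that the characterization \eqref{eq-11-bc} (valid here since $\del\delbar\omega=0$ is assumed directly) forces $f$ to be constant, and then combine $d\alpha=0$ with $d*\alpha=0$ to get $f\,d\omega=0$, hence $f=0$. Your closing remark about not needing a conformal change is also the right observation.
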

\begin{proof}
Assume $\del\delbar\omega=0$ and $d\omega\ne0$, and take $\phi\in A^{1,1}$ such that $\Delta_d\phi=0$. 
As in Theorem \ref{thm-11}, by equation (\ref{eq-dec11}), we have $\phi=f\omega+\gamma$, where $f$ is a smooth function with complex values on $M$ and $\gamma$ is anti-self-dual, i.e., $*\gamma=-\gamma$. Recall $\Delta_d\phi=0$ if and only if $d\phi=d*\phi=0$. This implies $\phi\in\H^{1,1}_{BC}$, since $d=\del+\delbar$ on $(1,1)$-forms, and $d*\phi=0$ yields $\del\delbar*\phi=0$.
By the proof of Theorem \ref{thm-11}, $f$ is a complex constant. Computing $d\phi=d*\phi=0$, we find
\begin{gather*}
0=d\phi=fd\omega+d\gamma,\\
0=d*\phi=fd\omega-d\gamma,
\end{gather*}
hence
\[
fd\omega=0,\ \ \ d\gamma=0.
\]
Since $d\omega\ne0$, we get $f=0$.
\end{proof}

\section{Bott-Chern harmonic forms on the Kodaira-Thurston manifold}\label{bc-harmonic-kodaira}

In this section we are going to compare Bott-Chern and Dolbeault harmonic forms on a family of almost K\"ahler structures on the Kodaira-Thurston manifold, following Holt and Zhang, \cite{HZ}.

The Kodaira-Thurston manifold, here denoted by $M$, is defined to be the direct product $S^1\times(H_3(\Z)\backslash H_3(\R))$, where $H_3(\R)$ denotes the Heisenberg group
\begin{equation*}
H_3(\R)=\left\{
\begin{pmatrix}1&x&z\\0&1&y\\0&0&1\end{pmatrix}
\in\GL(3,\R)\right\},
\end{equation*}
and $H_3(\Z)$ is the subgroup $H_3(\R)\cap\GL(3,\Z)$, acting on $H_3(\R)$ by left multiplication. The manifold $M$ is compact and connected. If $t$ is the coordinate on the circle $S^1$, and $x,y,z$ are the coordinates on $H_3(\Z)\backslash H_3(\R)$ as in the definition of $H_3(\R)$, we see that the manifold $M$ can be identified with $\R^4$, endowed with the group structure of $\R\times H_3(\R)$, quotiented by the equivalence relation
\begin{equation*}
\begin{pmatrix}t\\x\\y\\z\end{pmatrix}\sim
\begin{pmatrix}t+t_0\\x+x_0\\y+y_0\\z+z_0+x_0y\end{pmatrix},
\end{equation*}
for every $t_0,x_0,y_0,z_0\in\Z$. The vector fields
\begin{equation*}
e_1=\de{}{t},\ e_2=\de{}{x},\ e_3=\de{}{y}+x\de{}{z},\ e_4=\de{}{z}
\end{equation*}
are left invariant and form a basis of $T_pM$ at each point $p\in M$. The dual left invariant coframe is given by
\begin{equation*}
e^1=dt,\ e^2=dx, e^3=dy,\ e^4=dz-xdy.
\end{equation*}

Consider the almost complex structure $J_{b}$, for $b\in\R\setminus\{0\}$, given by
\begin{equation*}
V_1=\frac12\left(\de{}{t}-i\de{}{x}\right),\ \ \ V_2=\frac12\left(\left(\de{}{y}+x\de{}{z}\right)+\frac{i}{b}\de{}{z}\right)
\end{equation*}
spanning $T_p^{1,0}M$ at every point $p\in M$, along with their dual  $(1,0)$-forms
\begin{equation*}
\phi^1=dt+idx,\ \ \ \phi^2=dy-ib(dz-xdy).
\end{equation*}
Their structure equations are
\begin{equation*}
d\phi^1=0,\ \ \ d\phi^2=\frac{b}4(\phi^{12}+\phi^{1\c2}+\phi^{2\c1}-\phi^{\c1\c2}).
\end{equation*}

Endow every $(M,J_{b})$ with the family of almost K\"ahler metrics given by the compatible symplectic forms
\begin{equation*}
\omega_{b}=i(\phi^1\wedge\c\phi^1+\phi^2\wedge\c\phi^2)=2dt\wedge dx+2 b dz\wedge dy.
\end{equation*}
 Define the volume form $\vol$ such that
\begin{equation*}
2\vol=\omega_{b}^2=2\phi^1\wedge\phi^2\wedge\c\phi^1\wedge\c\phi^2=8 bdt\wedge dx\wedge dz\wedge dy.
\end{equation*}
Holt and Zhang, in \cite{HZ}, computed the spaces $\H^{p,q}_{\delbar}$ for every $p,q$. We will verify when $\H^{p,q}_{BC}=\H^{p,q}_{\delbar}$ and show that this equality is not true for every $p,q$ on the Kodaira-Thurston manifold $M$. Note that $\H^{p,q}_{\delbar}=\c{*\H^{2-p,2-q}_{\delbar}}$ by Serre duality. For Bott-Chern harmonic forms, we have
\begin{equation}\label{bc-cong}
\H^{p,q}_{BC}=\c{\H^{q,p}_{BC}}
\end{equation}
if it holds $\del\delbar+\delbar\del=0$ when restricted on $(n-q,n-p)$ forms.
For an almost complex $4$-manifold, \eqref{bc-cong} is true when
\begin{equation*}
(p,q)\in\{(0,0),(2,2),(2,0),(0,2),(1,0),(0,1),(1,1)\}.
\end{equation*}

Let us compute the spaces $\H^{p,q}_{BC}$, $\H^{p,q}_{\delbar}$ and compare them.
\subsection*{Bidegrees $(0,0),(2,2)$}
For $p=q=0$, it is immediate to see that both spaces are equal to constant functions on $M$. Similarly, for $p=q=2$, both spaces are spanned by $\phi^{12\c1\c2}$.

\subsection*{Bidegrees $(2,0),(0,2)$}
For $(p,q)\in\{(2,0),(0,2)\}$, note that $\H^{2,0}_{\delbar}=\c{*\H^{0,2}_{\delbar}}$ and $\H^{2,0}_{BC}=\c{\H^{0,2}_{BC}}$, therefore it is sufficient to prove $\H^{2,0}_{BC}=\H^{2,0}_{\delbar}$. This follows immediately since both spaces turn out to be equal to the set of $\delbar$-closed $(2,0)$-forms, i.e.,
\begin{equation*}
\H^{2,0}_{BC}=\H^{2,0}_{\delbar}=\{\alpha\in A^{2,0}(M)\,|\,\delbar\alpha=0\}.
\end{equation*}

In \cite[Section 6]{CZ} it is proved that
\begin{equation*}
\H^{2,0}_{\delbar}=
\begin{cases}
\C<\phi^{12}>&\text{ if }0\ne b\in 4\pi\Z,\\
0&\text{ if }b\notin 4\pi\Z.
\end{cases}
\end{equation*}

\subsection*{Bidegree $(1,1)$}
For $(p,q)=(1,1)$, in \cite{HZ} Holt and Zhang proved that on every almost complex $4$-manifold with an almost K\"ahler metric, every $(1,1)$-Dolbeault harmonic form is the sum of a complex multiple of the almost K\"ahler symplectic form and of an anti-self-dual harmonic form. Corollary \ref{cor-11} affirms that the same holds for every $(1,1)$-Bott-Chern harmonic forms.
Since $b^-=2$, it follows that $h^{1,1}_{\delbar}=h^{1,1}_{BC}=3$ and it suffices to find three harmonic $(1,1)$-forms to prove
\begin{equation*}
\H^{1,1}_{BC}=\H^{1,1}_{\delbar}=\C<\phi^{1\c1},\phi^{2\c2},\phi^{1\c2}-\phi^{2\c1}>.
\end{equation*}

\subsection*{Bidegrees $(1,0),(0,1)$}
For $(p,q)\in\{(1,0),(0,1)\}$, since $\H^{1,0}_{BC}=\c{\H^{0,1}_{BC}}$, let us compute $\H^{0,1}_{BC}$.
Let $s=f\c\phi^1+g\c\phi^2\in \H^{0,1}_{BC}$, i.e., $\del s=0$ and $\delbar s=0$. We get
\begin{align}
\del s=0\ &\iff\ V_1(f)\phi^{1\c1}+(V_1(g)-\frac{b}4g)\phi^{1\c2}+(V_2(f)-\frac{b}4g)\phi^{2\c1}+V_2(g)\phi^{2\c2},\label{01-1}\\
\delbar s=0\ &\iff\ (-\c{V_2}(f)+\c{V_1}(g)+\frac{b}4g)\phi^{\c1\c2}\label{01-2}=0.
\end{align}

By equation (\ref{01-1}), note that $V_2(g)=0$, implying $\c{V_2}V_2(g)=0$. The operator $-\c{V_2}V_2$ is a real operator, and it is strongly elliptic when computed on functions depending only on the coordinates $y,z$. Consider the projection $\pi:M\to T^2=\Z^2\backslash\R^2$ given by $\pi([t,x,y,z])=([t,x])$. The fiber of $\pi$ is a torus with coordinates $y,z$. As the fiber is compact, by the maximum principle applied to $\real(g)$ and $\img(g)$, we get that $g$ is constant on each fiber. Therefore, the function $g$ on $M$ depends only on the coordinates $t,x$. 

Now, note that $4[\c{V_1},V_2]=b(V_2-\c{V_2})$. Therefore, applying $V_2$ to 
\[
-\c{V_2}(f)+\c{V_1}(g)+\frac{b}4g=0,
\]
which derives from \eqref{01-2}, and taking into account that $g=g(t,x)$, we obtain $V_2\c{V_2}(f)=0$.

Again by (\ref{01-1}), note that $V_1(f)=0$, implying $\c{V_1}V_1(f)=0$. Since $[V_2,\c{V_2}]=0$, we derive that $f$ belongs to the kernel of $-\c{V_1}V_1-\c{V_2}V_2$, which is strongly elliptic. It follows that $f$ is a complex constant.

Equation (\ref{01-1}) also yields
\begin{equation*}
V_2(f)-\frac{b}4g=0.
\end{equation*}
Since $f$ is a complex constant, it follows that $g=0$.

Therefore 
\begin{equation*}
\H^{0,1}_{BC}=\C<\c\phi^1>,\ \ \ \H^{1,0}_{BC}=\C<\phi^1>.
\end{equation*}
Analogously, since $\H^{1,0}_{\delbar}=\{\alpha\in A^{1,0}(M)\,|\,\delbar\alpha=0\}$, it is easy to see that
\begin{equation*}
\H^{1,0}_{\delbar}=\C<\phi^1>\,=\H^{1,0}_{BC},
\end{equation*}
see \cite[Section 6]{CZ} for the proof. It is also easy to see 
\begin{equation*}
\C<\c\phi^1>\,\subset\H^{0,1}_{\delbar}.
\end{equation*}
 However, in \cite{HZ}, Holt and Zhang proved that $\sigma=Ce^{2\pi i l x}\c\phi^{2}\in\H^{0,1}_{\delbar}$, for $l\in\Z$, $b=4\pi l$ and for any $C\in\C$. Since $\sigma\notin\H^{0,1}_{BC}$, we just proved, for $b=4\pi l$,
\begin{equation*}
\H^{0,1}_{BC}\underset{\ne}{\subset}\H^{0,1}_{\delbar}.
\end{equation*}

\subsection*{Bidegree $(2,1)$}
For $(p,q)=(2,1)$, note that $\H^{2,1}_{\delbar}=\c{*\H^{0,1}_{\delbar}}$. Therefore, for $b=4\pi l$, we know $\c{*\sigma}\in\H^{2,1}_{\delbar}$. We will show that $\c{*\sigma}\notin\H^{2,1}_{BC}$, implying
\begin{equation*}
\H^{2,1}_{BC}\ne\H^{2,1}_{\delbar}.
\end{equation*}

Let us begin by describing the space $\H^{2,1}_{BC}$. Let $s=f\phi^{12\c1}+g\phi^{12\c2}$, then $s\in\H^{2,1}_{BC}$ if and only if  $\del\delbar*s=0$ and $\delbar s=0$, i.e., iff
\begin{equation}\label{bc-21}
\begin{cases}
V_1\c{V_1}(f)+V_2\c{V_1}(g)-\frac{b}4V_1(f)+\frac{b}4\c{V_1}(f)-\frac{b}4\c{V_2}(g)-\frac{b^2}8f=0,\\
V_1\c{V_2}(f)+V_2\c{V_2}(g)+\frac{b}4V_2(f)=0,\\
\c{V_1}(g)-\c{V_2}(f)=0.
\end{cases}
\end{equation}

It is an easy verification that the $(2,1)$-form $\c{*\sigma}$ does not satisfy the first equation of the system (\ref{bc-21}).

\subsection*{Bidegree $(1,2)$}
For $(p,q)=(1,2)$, we know $\H^{1,2}_{\delbar}=\c{*\H^{1,0}_{\delbar}}=\C<\phi^{2\c1\c2}>$. We will show that for some value of $b\ne0$ it holds
\begin{equation}\label{claim-12}
\H^{1,2}_{\delbar}\underset{\ne}{\subset}\H^{1,2}_{BC}.
\end{equation}

Let $s=f\phi^{1\c1\c2}+g\phi^{2\c1\c2}\in\H^{1,2}_{BC}$, i.e.,  $\del\delbar*s=0$ and $\del s=0$, i.e.,
\begin{equation}\label{bc-12}
\begin{cases}
V_1\c{V_1}(f)+V_1\c{V_2}(g)+\frac{b}4V_1(f)-\frac{b}4\c{V_1}(f)-\frac{b}4\c{V_2}(g)-\frac{b^2}{16}f=0,\\
 V_2\c{V_1}(f)+V_2\c{V_2}(g)+\frac{b}4V_2(f)=0,\\
{V_1}(g)-{V_2}(f)=0.
\end{cases}
\end{equation}
To prove (\ref{claim-12}), it will be sufficient to study solutions $f,g$ of system (\ref{bc-12}) which only depend on coordinates $t,x,y$. 
We decompose the functions $f,g$ in Fourier series as
\begin{equation*}
f=\sum_{k,l,m\in\Z}f_{k,l,m}e^{2\pi i(kt+lx+my)},\ \ \ g=\sum_{k,l,m\in\Z}g_{k,l,m}e^{2\pi i(kt+lx+my)}.
\end{equation*}

System (\ref{bc-12}) rewrites into
\begin{numcases}{}
(16\pi^2(k^2+l^2)-8b\pi l+b^2)f_{k,l,m}+4\pi m(4\pi k-4\pi il+ ib)g_{k,l,m}=0,     \label{f1}\\
\pi  m(4\pi k+4\pi il-ib)f_{k,l,m}+4\pi^2m^2g_{k,l,m}=0,      \label{f2}\\
m f_{k,l,m}=(k-il)g_{k,l,m},      \label{f3}
\end{numcases}
for all $k,l,m\in\Z$. From (\ref{f2}) and (\ref{f3}), we obtain $k=0$ and
\begin{equation}\label{sol-int}
4\pi m^2+4\pi l^2-bl=0,
\end{equation}
and, once we impose these conditions, (\ref{f1}) reduces to (\ref{f3}). 

Summing up, for any $l,m\in\Z$ and $b\ne0$ such that $4\pi m^2+4\pi l^2-bl=0$, there exists $s\in\H^{1,2}_{BC}$ given by
\begin{equation}\label{solutions}
\begin{cases}
s=Ce^{2\pi i(lx+my)}\phi^{1\c1\c2}-C\frac{im}{l}e^{2\pi i(lx+my)}\phi^{2\c1\c2}
&\text{ if }l\ne0,\\
s=C\phi^{2\c1\c2}&\text{ if }l=0,
\end{cases}
\end{equation}
for any $C\in\C$. For $l=0$, we get the inclusion of (\ref{claim-12}); to show that the inclusion is not an equality, take, e.g.,  $b=8\pi$, $l=1$, $m=1$.

\begin{remark}
Counting the solutions (\ref{solutions}), i.e., finding a lower bound on the complex dimension of $\H^{1,2}_{BC}$, is equivalent to asking how many couples $(l,m)\in\Z^2$ satisfy \eqref{sol-int},
which is equivalent to counting how many couples $(l,m)\in\Z^2$ satisfy
\begin{equation*}
m^2+(l-d)^2=d^2,
\end{equation*}
where we set $d=b/8\pi$. Counting the number of solutions can be thought of as asking how many lattice
points in $\Z\times\Z$ lie on a circle with centre $(d, 0)$ and radius $d$. This last number theoretic problem has already been addressed and solved by Holt and Zhang in \cite[Section 4]{HZ}, where they show that by changing the choice of $b$ (or equivalently $d$) one can make the number of solutions
become arbitrarily large.

Therefore, in view of the argument as above, we infer that by changing our choice of $b$, $h^{1,2}_{BC}$ may become arbitrarily large.
This conclusion has been already obtained by Holt in \cite[Example 4.4]{Ho} (the case $\rho=1$ in the notation of Holt), where the space of Bott-Chern harmonic $(1,2)$-forms is fully characterized.
\end{remark}

Summarizing the results just obtained, we state the following proposition.

\begin{proposition}\label{cor-kod}
Let $M$ be the Kodaira-Thurston manifold $S^1\times(H_3(\Z)\backslash H_3(\R))$ with local coordinates $t,x,y,z$.
Consider the almost complex structure $J_{b}$, for $b\in\R\setminus\{0\}$, given by
\begin{equation*}
\phi_1=dt+idx,\ \ \ \phi_2=dy-ib(dz-xdy).
\end{equation*}
spanning $(T_p^{1,0}M)^*$ at every point $p\in M$.
Endow $(M,J_{b})$ with the almost K\"ahler metric given by the compatible symplectic form
\begin{equation*}
\omega_{b}=i(\phi^1\wedge\c\phi^1+\phi^2\wedge\c\phi^2)=2dt\wedge dx+2b dz\wedge dy,
\end{equation*}
and the volume form $\vol$ such that $2\vol=\omega_{b}^2$.
Then, for all $b\in\R\setminus\{0\}$, and $(p,q)\in\{(0,0),(1,0),(1,1),(2,0),(0,2),(2,2)\}$
\begin{equation*}
\H^{p,q}_{BC}=\H^{p,q}_{\delbar},
\end{equation*}
while for $(p,q)\in\{(0,1),(2,1),(1,2)\}$, there exists $b\in\R\setminus\{0\}$ such that
\begin{equation*}
\H^{p,q}_{BC}\ne\H^{p,q}_{\delbar}.
\end{equation*}
\end{proposition}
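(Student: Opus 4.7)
The plan is to verify the equalities and strict inclusions bidegree by bidegree, leveraging the dualities $\H^{p,q}_{BC} = \c{\H^{q,p}_{BC}}$ (valid here whenever $\del\delbar + \delbar\del$ vanishes on $(n-q,n-p)$-forms, hence for all relevant bidegrees except $(2,1)$ and $(1,2)$) together with Serre duality $\H^{p,q}_{\delbar} = \c{*\H^{2-p,2-q}_{\delbar}}$ to cut down the number of cases. The base computations are the structure equations $d\phi^1 = 0$ and $d\phi^2 = \frac{b}{4}(\phi^{12} + \phi^{1\c2} + \phi^{2\c1} - \phi^{\c1\c2})$, from which one reads the explicit actions of $\del$ and $\delbar$ on the natural frame, together with the commutators $[V_2,\c{V_2}] = 0$ and $4[\c{V_1}, V_2] = b(V_2 - \c{V_2})$.

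The bidegrees $(0,0)$, $(2,2)$, $(2,0)$, $(0,2)$, $(1,0)$, $(1,1)$ dispatch quickly. For $(0,0)$ and $(2,2)$ the harmonic spaces are constants, respectively constant multiples of the top form. For $(2,0)$ and $(0,2)$ both the Bott-Chern and Dolbeault conditions collapse to $\delbar$-closedness (the remaining conditions are automatic for degree reasons on a $4$-manifold). For $(1,1)$ I would invoke Corollary \ref{cor-11} and the Holt-Zhang analogue for $\H^{1,1}_{\delbar}$; since $b^- = 2$ on the Kodaira-Thurston manifold, both spaces equal $\C\langle\phi^{1\c1}, \phi^{2\c2}, \phi^{1\c2} - \phi^{2\c1}\rangle$. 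For $(1,0)$ both kernels reduce to $\delbar$-closed $(1,0)$-forms, which one checks directly to be $\C\langle\phi^1\rangle$.

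The substantive case is $(0,1)$. Writing $s = f\c\phi^1 + g\c\phi^2$ and expanding $\del s = 0$, $\delbar s = 0$, I would extract the relation $V_2(g) = 0$, hence $-\c{V_2}V_2(g) = 0$. Since $-\c{V_2}V_2$ is strongly elliptic on the compact torus fibers of the projection $\pi \colon M \to T^2$, $(t,x,y,z) \mapsto (t,x)$, the maximum principle applied to $\real(g)$ and $\img(g)$ forces $g = g(t,x)$. Applying $V_2$ to the equation coming from $\delbar s = 0$ and using the commutator identity then yields $V_2 \c{V_2}(f) = 0$; together with $V_1(f) = 0$ and $[V_2,\c{V_2}]=0$, this places $f$ in the kernel of the globally strongly elliptic $-\c{V_1}V_1 - \c{V_2}V_2$, so $f$ is a complex constant. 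The remaining equation $V_2(f) - (b/4)g = 0$ forces $g = 0$, so $\H^{0,1}_{BC} = \C\langle\c\phi^1\rangle$. The strict inclusion into $\H^{0,1}_{\delbar}$ for $b = 4\pi l$, $l \in \Z\setminus\{0\}$, follows from the Holt-Zhang form $\sigma = Ce^{2\pi i l x}\c\phi^2$.

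For $(2,1)$ and $(1,2)$ I would write out the Bott-Chern PDE systems \eqref{bc-21} and \eqref{bc-12} via direct computation of $\del\delbar *s$. In the $(2,1)$ case, $\c{*\sigma} \in \H^{2,1}_{\delbar}$ by Serre duality but fails the first equation of \eqref{bc-21}, giving strict containment. In the $(1,2)$ case, I would restrict attention to solutions $(f,g)$ depending only on $(t,x,y)$, separate variables via Fourier series, and reduce the problem to the algebraic system \eqref{f1}--\eqref{f3}, which forces $k = 0$ and the Diophantine relation $4\pi m^2 + 4\pi l^2 - bl = 0$; the choice $b = 8\pi$, $l = m = 1$ then produces a Bott-Chern harmonic $(1,2)$-form lying outside $\H^{1,2}_{\delbar} = \C\langle\phi^{2\c1\c2}\rangle$. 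The main conceptual obstacle is the careful deployment of the maximum principle in the $(0,1)$ case: strong ellipticity of $-\c{V_2}V_2$ alone only holds fiberwise over $T^2$, and one must first reduce to that setting before promoting the final conclusion to a global statement via the second elliptic operator $-\c{V_1}V_1 - \c{V_2}V_2$.
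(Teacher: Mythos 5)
Your proposal is correct and follows essentially the same route as the paper: the same bidegree-by-bidegree case analysis, the same dualities, the same fiberwise maximum-principle argument for $(0,1)$, the same test form $\c{*\sigma}$ for $(2,1)$, and the same Fourier reduction with $b=8\pi$, $l=m=1$ for $(1,2)$. One tiny imprecision: in bidegree $(1,0)$ the Bott-Chern condition is $\del\alpha=0$ \emph{and} $\delbar\alpha=0$ (only $\del\delbar*\alpha=0$ is automatic), so the reduction to $\delbar$-closedness alone is not a degree reason but must be checked on the explicit solution $\phi^1$, which is indeed $\del$-closed.
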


\begin{corollary}\label{cor-kod-2}
There exists an almost K\"ahler $4$-manifold $(M,J,g,\omega)$ such that for some bidegree $(p,q)$ it holds that
\begin{equation*}
\H^{p,q}_{BC}\ne\H^{p,q}_{\delbar}.
\end{equation*}
\end{corollary}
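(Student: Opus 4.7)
The plan is to exhibit a concrete example, which has essentially been built in the preceding analysis of the Kodaira--Thurston manifold. Indeed, Corollary \ref{cor-kod-2} is an immediate consequence of Proposition \ref{cor-kod}: the latter already isolates three bidegrees, namely $(0,1)$, $(2,1)$ and $(1,2)$, for which the two harmonic spaces fail to coincide for suitable values of $b$. So my proof will be one or two lines stating that one just picks such a $b$ and such a bidegree.

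To make the plan concrete, I would point the reader to the computation in bidegree $(0,1)$. Take the Kodaira--Thurston manifold $M = S^1 \times (H_3(\Z)\backslash H_3(\R))$ equipped with the almost complex structure $J_{b}$ and compatible almost K\"ahler form $\omega_{b}$ described in Proposition \ref{cor-kod}, and fix an integer $l \in \Z \setminus \{0\}$ together with $b = 4\pi l$. The analysis of bidegree $(0,1)$ in the previous section shows that $\H^{0,1}_{BC} = \C\langle \c\phi^1 \rangle$, whereas the form $\sigma = e^{2\pi i l x} \c\phi^2$ lies in $\H^{0,1}_{\delbar}$ and is not a scalar multiple of $\c\phi^1$. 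Hence $\H^{0,1}_{BC} \subsetneq \H^{0,1}_{\delbar}$, proving the claim.

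The proof needs no further ingredients; every computation has already been carried out in the lead-up to the statement. If one preferred a self-dual example, one could equally well invoke the bidegree $(1,2)$ analysis, where choosing $b = 8\pi$ (so that $(l,m) = (1,1)$ solves $4\pi m^2 + 4\pi l^2 - bl = 0$) produces a Bott--Chern harmonic $(1,2)$-form outside of $\H^{1,2}_{\delbar} = \C\langle \phi^{2\c1\c2}\rangle$. There is no real obstacle here: the only substantive content lies in Proposition \ref{cor-kod} itself, and the corollary is a routine extraction of the relevant case.
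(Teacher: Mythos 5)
Your proposal is correct and follows exactly the paper's route: the corollary is stated as an immediate consequence of Proposition \ref{cor-kod}, whose content is the bidegree-by-bidegree computation on the Kodaira--Thurston manifold, and the bidegree $(0,1)$ case with $b=4\pi l$ that you single out is precisely one of the instances the paper establishes. Nothing further is needed.
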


\section{Locally conformally almost K\"ahler metrics}\label{sec-lck}
Let $(M,J,g,\omega)$ be an almost Hermitian manifold. Following \cite{TT}, we say $\omega$ is \emph{strictly locally conformally almost K\"ahler} if
\[
d\omega=\theta\wedge\omega,
\]
and $\theta$ is $d$-closed but non $d$-exact. Conversely, we say $\omega$ is \emph{globally conformally almost K\"ahler}, if
\[
d\omega=\theta\wedge\omega,
\]
and $\theta$ is $d$-exact. As mentioned in the introduction, Tardini and the second author proved that $h^{1,1}_{\delbar}=b^-$ on every compact almost complex 4-manifold with a strictly locally conformally almost K\"ahler metric. Here, we prove that Bott-Chern harmonic $(1,1)$-forms have a different behaviour than Dolbeault harmonic $(1,1)$-forms. Namely, we describe an almost complex structure on a hyperelliptic surface, endowed with a strictly locally conformally almost K\"ahler metric, such that $h^{1,1}_{BC}=b^-+1$.

Note that in the integrable case, i.e., on compact complex surfaces, it holds $h^{1,1}_{BC}=b^-+1$ on K\"ahler surfaces, on complex surfaces diffeomorphic to solvmanifolds, and on complex surfaces of class VII (see \cite{ADT} and \cite[Chapter IV, Theorem 2.7]{BPV}).

Following Hasegawa, \cite{Ha}, let $G$ be the group $\C^2$ together with the multiplication
\[
(w^1,w^2)\cdot(z^1,z^2)=(w^1+e^{i\pi\frac{w^2+\c{w^2}}2}z^1,w^2+z^2),
\]
and let $\Gamma$ be the subgroup of $G$ given by $(\Z+i\Z)^2$. This corresponds to the hyperelliptic surface with $\eta=\pi$ and $p=q=s=t=0$ in the notation of Hasegawa. Let $M$ be the solvmanifold $\Gamma\backslash G$, and denote by $x^1,y^1,x^2,y^2$ the local coordinates of $M$ induced from $\C^2$, i.e., $z^1=x^1+iy^1$, $z^2=x^2+iy^2$. The vector fields
\begin{gather*}
e_1=\cos(\pi x^2)\de{}{x^1}+\sin(\pi x^2)\de{}{y^1},\\ e_2=-\sin(\pi x^2)\de{}{x^1}+\cos(\pi x^2)\de{}{y^1},\\ e_3=\de{}{x^2},\ e_4=\de{}{y^2}
\end{gather*}
are left invariant with respect to the action of
the subgroup $\Gamma$ and form a basis of $TM$ at each point. The dual left invariant coframe is given by
\begin{gather*}
e^1=\cos(\pi x^2)dx^1+\sin(\pi x^2)dy^1,\\ e^2=-\sin(\pi x^2)dx^1+\cos(\pi x^2)dy^1,\\ e^3=dx^2,\ e^4=dy^2,
\end{gather*}
with structure equations
\[
de^1=-\pi e^{23},\ de^2=\pi e^{13},\ de^3=0,\ de^4=0.
\]
The De Rham cohomology of $M$ is, see e.g. \cite{ADT},
\begin{equation}\label{de-rham}
H_{dR}^1=\R<e^3,e^4>,\ \ \ H_{dR}^2=\R<e^{12},e^{34}>.
\end{equation}

Consider the almost complex structure $J$ given by
\begin{gather*}
V_1=\frac12(e_1-ie_3)=\frac12(\cos(\pi x^2)\de{}{x^1}+\sin(\pi x^2)\de{}{y^1}-i\de{}{x^2}),\\
V_2=\frac12(e_2-ie_4)=\frac12(-\sin(\pi x^2)\de{}{x^1}+\cos(\pi x^2)\de{}{y^1}-i\de{}{y^2}),
\end{gather*}
spanning $T_p^{1,0}M$ at every point $p\in M$, along with their dual $(1,0)$-forms
\begin{gather*}
\phi^1=e^1+ie^3=\cos(\pi x^2)dx^1+\sin(\pi x^2)dy^1+idx^2,\\
\phi^2=e^2+ie^4=-\sin(\pi x^2)dx^1+\cos(\pi x^2)dy^1+idy^2.
\end{gather*}
Their structure equations are
\begin{equation*}
d\phi^1=i\frac\pi4(-\phi^{12}-\phi^{1\c2}-\phi^{2\c1}+\phi^{\c1\c2}),\ \ \ d\phi^2=i\frac\pi2\phi^{1\c1}.
\end{equation*}

Endow $(M,J)$ with the almost Hermitian metric given by the compatible symplectic form
\begin{equation*}
\omega=e^{13}+e^{24}=\frac{i}2(\phi^1\wedge\c\phi^1+\phi^2\wedge\c\phi^2),
\end{equation*}
and define the volume form $\vol$ such that
\begin{equation*}
\vol=\frac{\omega^2}2=\frac14\phi^1\wedge\phi^2\wedge\c\phi^1\wedge\c\phi^2.
\end{equation*}
Note that $b^-=1$.
Also note that $\omega$ is strictly locally conformally almost K\"ahler, since
\[
d\omega=\pi e^{134}=\theta\wedge\omega,
\]
with $\theta=\pi e^4$, which is closed but not exact by \eqref{de-rham}.

Let us now compute $h^{1,1}_{BC}$. Let $\psi=f\omega+\gamma\in \H^{1,1}_{BC}$, with $f\in\C$ and $*\gamma=-\gamma$. The $(1,1)$-form $\gamma$ can be written as
\[
\gamma=A\phi^{1\c1}+B\phi^{1\c2}+C\phi^{2\c1}-A\phi^{2\c2},
\]
with $A,B,C\in\cinf(M,\C)$. We compute $d\psi=0$ and find
\begin{equation}\label{hyp-11}
\begin{cases}
4V_1(C)-4V_2(A)-2\pi iA-\pi f=0,\\
4V_1(A)+4V_2(B)+\pi iB+\pi iC=0,\\
4\c{V_1}(B)-4\c{V_2}(A)+2\pi iA+\pi f=0,\\
4\c{V_1}(A)+4\c{V_2}(C)-\pi i C-\pi iB=0.
\end{cases}
\end{equation}
Note that for $2A=if\ne0$ and $B=C=0$, we get $f\omega+\gamma\in \H^{1,1}_{BC}$ with $f\ne0$. By the proof of Theorem \ref{thm-11}, it immediately yields that $h^{1,1}_{BC}=b^-+1$. However, let us also reprove $h^{1,1}_{BC}=b^-+1$ explicitly, without the help of Theorem \ref{thm-11}.

Every function on $M$ is, in particular, $2(\Z+i\Z)$-periodic in both complex variables, therefore we may decompose the functions $A,B,C$ in Fourier series as
\begin{gather*}
A=\sum_{k,l,m,n\in\Z}A_{k,l,m,n}e^{i\pi (kx^1+ly^1+mx^2+ny^2)},\\ B=\sum_{k,l,m,n\in\Z}B_{k,l,m,n}e^{i\pi (kx^1+ly^1+mx^2+ny^2)},\\ C=\sum_{k,l,m,n\in\Z}C_{k,l,m,n}e^{i\pi (kx^1+ly^1+mx^2+ny^2)}.
\end{gather*}
For every $(0,0,0,0)\ne(k,l,m,n)\in\Z^4$ and $x^2\in\R$, system (\ref{hyp-11}) rewrites into
\begin{equation}\label{hyp-11-fou}
\begin{cases}
( i\cos(\pi x^2)l+ i\sin(\pi x^2)k+ m)C_{k,l,m,n}+\\
+( i\sin(\pi x^2) l- i \cos(\pi x^2)k- n- i)A_{k,l,m,n}=0,\\
(2 i\cos(\pi x^2)l+2 i\sin(\pi x^2)k+2 m)A_{k,l,m,n}+\\
+(-2 i\sin(\pi x^2) l+2 i \cos(\pi x^2)k+2 n+ i)B_{k,l,m,n}+ i C_{k,l,m,n}=0,\\
( i\cos(\pi x^2)l+ i\sin(\pi x^2)k- m)B_{k,l,m,n}+\\
+( i\sin(\pi x^2) l- i \cos(\pi x^2)k+ n+ i)A_{k,l,m,n}=0,\\
(2 i\cos(\pi x^2)l+2 i\sin(\pi x^2)k-2 m)A_{k,l,m,n}+\\
+(-2 i\sin(\pi x^2) l+2 i \cos(\pi x^2)k-2 n- i)C_{k,l,m,n}- i B_{k,l,m,n}=0.\\
\end{cases}
\end{equation}
Differentiate (for sign convenience)  system \eqref{hyp-11-fou} two times with respect to $x^2$ to find
\begin{equation}\label{hyp-11-fou-der}
\begin{cases}
(-\cos(\pi x^2)l-\sin(\pi x^2)k)C_{k,l,m,n}+(-\sin(\pi x^2) l+\cos(\pi x^2)k)A_{k,l,m,n}=0,\\
(-\cos(\pi x^2)l-\sin(\pi x^2)k)A_{k,l,m,n}+(\sin(\pi x^2) l-\cos(\pi x^2)k)B_{k,l,m,n}=0,\\
(-\cos(\pi x^2)l-\sin(\pi x^2)k)B_{k,l,m,n}+(-\sin(\pi x^2) l+\cos(\pi x^2)k)A_{k,l,m,n}=0,\\
(-\cos(\pi x^2)l-\sin(\pi x^2)k)A_{k,l,m,n}+(\sin(\pi x^2) l-\cos(\pi x^2)k)C_{k,l,m,n}=0.\\
\end{cases}
\end{equation}
If $x^2=0$, it is easy to see that $(k,l)\ne(0,0)$ implies $A_{k,l,m,n}=B_{k,l,m,n}=C_{k,l,m,n}=0$.
Therefore, the functions $A,B,C$ depend only on variables $x^2,y^2$, and we can assume $k=l=0$.
For every $(0,0)\ne(m,n)\in\Z^2$, system (\ref{hyp-11-fou}) rewrites into
\begin{equation}\label{hyp-11-fou-2}
\begin{cases}
 mC_{0,0,m,n}-( n+ i)A_{0,0,m,n}=0,\\
2 mA_{0,0,m,n}+(2n+ i)B_{0,0,m,n}+ i C_{0,0,m,n}=0,\\
 mB_{0,0,m,n}-( n+ i)A_{0,0,m,n}=0,\\
2 mA_{0,0,m,n}+(2 n+ i)C_{0,0,m,n}+ i B_{0,0,m,n}=0.\\
\end{cases}
\end{equation}
From system \eqref{hyp-11-fou-2}, subtracting the third equation from the first, we get 
\[
m(C_{0,0,m,n}-B_{0,0,m,n})=0.
\]
If $m=0$, then $A_{0,0,0,n}=B_{0,0,0,n}=C_{0,0,0,n}=0$ for every $0\ne n\in\Z$. Conversely, if $C_{0,0,m,n}=B_{0,0,m,n}$ for all $m,n\in\Z$ with $m\ne0$, combining the first two equations of \eqref{hyp-11-fou-2} we get
\[
(m^2+n^2-1+2ni)B_{0,0,m,n}=0,
\]
implying either $n=0$ and $m^2=1$ or $B_{0,0,m,n}=0$ for $n\ne0$. If $B_{0,0,m,n}=0$ for all $m,n\in\Z$ with $m\ne0\ne n$, it also follows that $A_{0,0,m,n}=C_{0,0,m,n}=0$. On the other hand, if $n=0$ and $m=\pm1$, then every choice $B_{0,0,m,0}=C_{0,0,m,0}=imA_{0,0,m,0}\in\C$ provides a solution of system (\ref{hyp-11-fou}). Therefore
\begin{equation}\label{eq-sol-abc}
B=C=\pm iA=\pm iKe^{\pm i\pi x^2}
\end{equation}
are solutions of system \eqref{hyp-11} for every complex constant $K\in\C$. However, note that
\[
A(z^1,z^2)=Ke^{\pm i\pi x^2}\ne Ke^{\pm i\pi (x^2+1)}=A(-z^1,z^2+1)=A((0,1)\cdot(z^1,z^2))
\]
for every $z^1=x^1+iy^1,z^2=x^2+iy^2\in\C$ and $0\ne K\in\C$, thus the functions $A,B,C$ in \eqref{eq-sol-abc} are not well defined on $M$.

Other solutions of system \eqref{hyp-11} are found when $k=l=m=n=0$ and $A,B,C\in\C$ are complex constants. More precisely, we get $2A=if$ and $B=-C$.

Therefore, we re-obtain $h^{1,1}_{BC}=2=b^-+1$ and
\[
\H^{1,1}_{BC}=\C<\phi^{1\c1},\phi^{1\c2}-\phi^{2\c1}>.
\]

\begin{remark}
It is worth asking if on compact almost complex 4-manifolds $h^{1,1}_{BC}$ may be always equal to $b^-+1$ or there are explicit examples where $h^{1,1}_{BC}=b^-$.
Very recently Holt, in \cite[Theorem 4.2]{Ho}, proved that $h^{1,1}_{BC}$ is always equal to $b^-+1$.
\end{remark}

\section{Bott-Chern cohomology of almost complex manifolds}\label{cohomology}
In \cite{CW}, Cirici and Wilson introduced a generalization of Dolbeault cohomology on almost complex manifolds. 
Let $(M,J)$ be an almost complex manifold and
\[
H^{p,q}_ {\c\mu}=\frac{\ker\c\mu\cap A^{p,q}}{\c\mu A^{p+1,q-2}}
\]
be the $\c\mu$-cohomology, which is well defined since $\c\mu^2=0$.
Note that $\delbar$ induces a morphism of vector spaces
\[
\delbar:H^{p,q}_ {\c\mu}\to H^{p,q+1}_ {\c\mu},
\]
since $\c\mu\delbar+\delbar\c\mu=0$. Furthermore, $\delbar^2+\c\mu\del+\del\c\mu=0$ implies $\delbar^2=0$ on $H^{p,q}_ {\c\mu}$. Then, the Dolbeault cohomology of $M$ is defined by
\[
H^{p,q}_{Dol}=\frac{\ker\delbar\cap H^{p,q}_ {\c\mu}}{\delbar H^{p,q-1}_ {\c\mu}}.
\]
Analogously, define the $\mu$-cohomology
\[
H^{p,q}_ {\mu}=\frac{\ker\mu\cap A^{p,q}}{\mu A^{p-2,q+1}},
\]
and the conjugated Dolbeault cohomology
\[
H^{p,q}_{\c{Dol}}=\frac{\ker\del\cap H^{p,q}_ {\mu}}{\del H^{p-1,q}_ {\mu}}.
\]
The Dolbeault cohomology of almost complex manifolds generalizes the classical Dolbeault cohomology of complex manifolds, and satisfies some desirable properties. In particular, the authors modify the classical Hodge filtration for complex manifolds by taking into account the presence of $\c\mu$ and show that the Dolbeault cohomology of every almost complex manifold arises in the first stage of the spectral sequence associated to this new Hodge filtration, which converges to the complex de Rham cohomology of the manifold.
However, in \cite{CPS}, Coelho, Placini and Stelzig show that the Dolbeault cohomology of almost complex manifolds is often infinite dimensional.

Still in \cite{CPS}, the authors also give the following definition for Bott-Chern and Aeppli cohomologies of almost complex manifolds. 
Given any almost complex manifold $(M,J)$, consider the spaces of forms
\[
A^{*,*}_s:=\ker \c\mu\cap\ker\delbar^2\cap\ker\del^2\cap\ker\mu
\]
and
\[
A^{*,*}_r:=A^{*,*}/(\im\c\mu+\im\delbar^2+\im\del^2+\im\mu),
\]
and note that both $(A_s^{*,*},\del,\delbar)$ and $(A_r^{*,*},\del,\delbar)$ are double complexes. Therefore, define the Bott-Chern and Aeppli cohomologies of an almost complex manifold as the usual Bott-Chern and Aeppli cohomologies of respectively the double complexes $(A_s^{*,*},\del,\delbar)$ and $(A_r^{*,*},\del,\delbar)$. More precisely,
\[
H^{p,q}_{BC}:=\frac{\ker d\cap A^{p,q}_s}{\del\delbar A^{p-1,q-1}_s}
\]
and
\[
H^{p,q}_{A}:=\frac{\ker \del\delbar\cap A^{p,q}_r}{\del A^{p-1,q}_r+\delbar A^{p,q-1}_r}.
\]
It turns out the following commutative diagram holds  as in the integrable case
\[
\begin{tikzcd}
\  & H^{*,*}_{BC} \arrow[ld] \arrow[d] \arrow[rd] &\  \\
H^{*,*}_{Dol}  \arrow[rd]  & H^{*}_{dR} \arrow[d]   &  \arrow[ld] H^{*,*}_{\c{Dol}}\\
\ & H^{*,*}_{A}&\ 
\end{tikzcd}
\]
where arrows are morphisms of vector spaces. Moreover, $H^{*,*}_A$ is a bigraded module over $H^{*,*}_{BC}$, and conjugation induces isomorphisms $H^{p,q}_{BC}\cong H^{q,p}_{BC}$, $H^{p,q}_{A}\cong H^{q,p}_{A}$.  Note that, like Dolbeault cohomology, also Bott-Chern and Aeppli cohomologies may be infinite dimensional on compact almost complex manifolds.

Let $(M,J,g,\omega)$ be a compact almost Hermitian manifold. Since the Bott-Chern and Aeppli Laplacians are elliptic, the Hodge theory developed by Schweitzer in \cite{S} applies, yielding the $L^2$-orthogonal decompositions
\[
A^{p,q}=\H^{p,q}_{BC}\oplus\del\delbar A^{p-1,q-1}\oplus(\del^* A^{p+1,q}+\delbar^* A^{p,q+1})
\]
and
\[
A^{p,q}=\H^{p,q}_{A}\oplus\delbar^*\del^* A^{p+1,q+1}\oplus(\del A^{p-1,q}+\delbar A^{p,q-1}).
\]

In general, the spaces $\H^{p,q}_{BC}$ and $\H^{p,q}_{A}$ seem to be unrelated to the Bott-Chern and Aeppli cohomology spaces just introduced. 

However, if we take $n=\dim_\R M=4$ and $p=q=1$, note that $A^{1,1}_s=A^{1,1}=A^{1,1}_r$, and the previous Bott-Chern decomposition yields, intersecting with $\ker d$,
\[
\ker d\cap A^{1,1}=\H^{1,1}_{BC}\oplus\ker d\cap\del\delbar A^{0,0}.
\]
Therefore, there is a well defined injection
\[
\H^{1,1}_{BC}\overset{j}{\longrightarrow} H^{1,1}_{BC}=\frac{\ker d\cap A^{1,1}}{\del\delbar A^{0,0}_s}.
\]
In general, there seems no reason to think this injection is also a surjection. Note that $j$ being surjective would imply $h^{1,1}_{BC}$ is an almost complex invariant on $4$-manifolds.

We can re-obtain the previous injection of $(1,1)$-forms as a particular case of the following observation. Let $(M,J,g,\omega)$ be a compact almost Hermitian manifold of real dimension $2n$ and intersect the Bott-Chern decomposition with the space $\ker d\cap A^{p,q}_s$, deriving
\[
\ker d\cap A^{p,q}_s=\H^{p,q}_{BC}\cap A^{p,q}_s\oplus\ker d\cap A^{p,q}_s\cap\del\delbar A^{p-1,q-1}.
\]
Therefore, there is a well defined injection
\[
\H^{p,q}_{BC}\cap A^{p,q}_s\overset{j}{\longrightarrow} H^{p,q}_{BC}.
\]
Summing up, we have
\begin{proposition}\label{BC-cohomology}
Let $(M,J,g,\omega)$ be a compact almost Hermitian manifold of real dimension $2n$. Then we have an injection
\[
\H^{p,q}_{BC}\cap A^{p,q}_s\overset{j}{\longrightarrow} H^{p,q}_{BC}.
\]
\end{proposition}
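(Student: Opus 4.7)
My plan is to follow directly the observation spelled out in the paragraph preceding the statement: take Schweitzer's Bott-Chern decomposition, intersect it with the subspace $\ker d\cap A^{p,q}_s$, and read off the injection. All the work is in checking that the summands behave nicely under this intersection and that Bott-Chern harmonic forms inside $A^{p,q}_s$ are automatically $d$-closed.

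First I would recall that on a compact almost Hermitian manifold the Bott-Chern Laplacian $\tilde\Delta_{BC}$ is elliptic by Proposition \ref{prop-ell}, so Schweitzer's $L^2$-orthogonal decomposition
\[
A^{p,q}=\H^{p,q}_{BC}\oplus\del\delbar A^{p-1,q-1}\oplus\bigl(\del^*A^{p+1,q}+\delbar^*A^{p,q+1}\bigr)
\]
holds verbatim. Next I would check that $\H^{p,q}_{BC}\cap A^{p,q}_s\subset\ker d$: if $\alpha$ lies in this intersection, the characterization of Bott-Chern harmonic forms gives $\del\alpha=0$ and $\delbar\alpha=0$, while membership in $A^{p,q}_s=\ker\c\mu\cap\ker\delbar^2\cap\ker\del^2\cap\ker\mu$ gives $\mu\alpha=0$ and $\c\mu\alpha=0$, so $d\alpha=\mu\alpha+\del\alpha+\delbar\alpha+\c\mu\alpha=0$. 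Hence $\alpha$ defines a class in
\[
H^{p,q}_{BC}=\frac{\ker d\cap A^{p,q}_s}{\del\delbar A^{p-1,q-1}_s},
\]
so the assignment $j(\alpha)=[\alpha]$ is well defined.

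For injectivity, suppose $\alpha\in\H^{p,q}_{BC}\cap A^{p,q}_s$ satisfies $j(\alpha)=0$. Then $\alpha=\del\delbar\beta$ for some $\beta\in A^{p-1,q-1}_s\subset A^{p-1,q-1}$, so in particular $\alpha\in\del\delbar A^{p-1,q-1}$. But the Bott-Chern decomposition above is $L^2$-orthogonal, so $\H^{p,q}_{BC}\cap\del\delbar A^{p-1,q-1}=0$, forcing $\alpha=0$.

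The proof is essentially a bookkeeping exercise; the only conceptual point is verifying that the inclusion $\H^{p,q}_{BC}\cap A^{p,q}_s\subset\ker d$ actually holds, which is where the definition of $A^{p,q}_s$ is used in a nontrivial way (the conditions $\mu\alpha=\c\mu\alpha=0$ are precisely what upgrades $\del\alpha=\delbar\alpha=0$ to full $d$-closedness). I do not foresee any serious obstacle, since both the decomposition and the characterization of Bott-Chern harmonic forms have already been established earlier in the paper.
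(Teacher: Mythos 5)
Your proof is correct and follows essentially the same route as the paper: both rest on Schweitzer's $L^2$-orthogonal Bott--Chern decomposition together with the observation that membership in $A^{p,q}_s$ supplies $\mu\alpha=\c\mu\alpha=0$, upgrading $\del\alpha=\delbar\alpha=0$ to $d\alpha=0$. If anything, your version is slightly cleaner, since you only invoke the orthogonality $\H^{p,q}_{BC}\perp\del\delbar A^{p-1,q-1}$ to get injectivity directly, rather than asserting the full intersected direct-sum decomposition of $\ker d\cap A^{p,q}_s$ as the paper does.
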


\end{document}